\documentclass[11pt]{amsart}
\usepackage{mathrsfs,amssymb,amsfonts,amsmath,latexsym}
\setcounter{page}{1} \setlength{\textwidth}{14.6cm}
\setlength{\textheight}{22.5cm} \setlength{\evensidemargin}{0.8cm}
\setlength{\oddsidemargin}{0.8cm} \setlength{\topmargin}{0.8cm}

\newtheorem{theorem}{Theorem}[section]
\newtheorem{lemma}[theorem]{Lemma}
\newtheorem{corollary}[theorem]{Corollary}
\newtheorem{question}[theorem]{Question}
\newtheorem{example}[theorem]{Example}
\theoremstyle{definition}
\newtheorem{definition}[theorem]{Definition}
\newtheorem{proposition}[theorem]{Proposition}
\theoremstyle{remark}

\begin{document}

\title[A class of quotient spaces in strongly topological gyrogroups]
{A class of quotient spaces in strongly topological gyrogroups}

\author{Meng Bao}
\address{(Meng Bao): College of Mathematics, Sichuan University, Chengdu 610064, P. R. China}
\email{mengbao95213@163.com}

\author{Jie Wang}
\address{(Jie Wang): School of mathematics and statistics,
Minnan Normal University, Zhangzhou 363000, P. R. China}
\email{jiewang0104@163.com}

\author{Xiaoquan Xu*}
\address{(Xiaoquan Xu): School of mathematics and statistics,
Minnan Normal University, Zhangzhou 363000, P. R. China}
\email{xiqxu2002@163.com}

\thanks{The authors are supported by the National Natural Science Foundation of China (Nos. 11661057, 12071199) and the Natural Science Foundation of Jiangxi Province, China (No. 20192ACBL20045)\\
*corresponding author}

\keywords{Topological gyrogroups; metrizability; $\omega^{\omega}$-base; Fr\'echet-Urysohn.}%insert keywords
\subjclass[2010]{Primary 54A20; secondary 11B05; 26A03; 40A05; 40A30; 40A99.}%insert subject class

%\date{\today}
\begin{abstract}
Quotient space is a class of the most important topological spaces in the research of topology. In this paper, we show that if $(G,\tau ,\oplus)$ is a strongly topological gyrogroup with a symmetric neighborhood base $\mathscr U$ at $0$ and $H$ is an admissible subgyrogroup generated from $\mathscr U$, then $G/H$ is first-countable if and only if it is metrizable. Moreover, if $H$ is neutral and $G/H$ is Fr\'echet-Urysohn with an $\omega^{\omega}$-base, then $G/H$ is first-countable. Therefore, we obtain that if $H$ is neutral, then $G/H$ is metrizable if and only if $G/H$ is Fr\'echet-Urysohn with an $\omega^{\omega}$-base. Finally, it is shown that if $H$ is neutral, $\pi\chi(G/H)=\chi(G/H)$ and $\pi\omega (G/H)=\omega (G/H)$.
\end{abstract}

\maketitle
\section{Introduction}
A.A. Ungar has studied the $c$-ball of relativistically admissible velocities with Einstein velocity addition for many years. The Einstein velocity addition $\oplus _{E}$ is given as the following: $$\mathbf{u}\oplus _{E}\mathbf{v}=\frac{1}{1+\frac{\mathbf{u}\cdot \mathbf{v}}{c^{2}}}(\mathbf{u}+\frac{1}{\gamma _{\mathbf{u}}}\mathbf{v}+\frac{1}{c^{2}}\frac{\gamma _{\mathbf{u}}}{1+\gamma _{\mathbf{u}}}(\mathbf{u}\cdot \mathbf{v})\mathbf{u}),$$ where $\mathbf{u,v}\in \mathbb{R}_{c}^{3}=\{\mathbf{v}\in \mathbb{R}^{3}:||\mathbf{v}||<c\}$ and $\gamma _{\mathbf{u}}$ is given by $$\gamma _{\mathbf{u}}=\frac{1}{\sqrt{1-\frac{\mathbf{u}\cdot \mathbf{u}}{c^{2}}}}.$$ During this time, he defined the gyrogroup in \cite{UA2002}. By the definition of a gyrogroup, it is easy to see that every group is a gyrogroup. For more studies about gyrogroups, see \cite{FM, FM1,FM2,UA1988,UA,UA2005}. By the development of gyrogroups, W. Atiponrat \cite{AW} equipped gyrogroups with a topology and posed the concept of topological gyrogroups. A topological gyrogroup, that is, a gyrogroup $G$ is endowed with a topology such that the binary operation $\oplus: G\times G\rightarrow G$ is jointly continuous and the inverse mapping $\ominus (\cdot): G\rightarrow G$, i.e. $x\rightarrow \ominus x$, is also continuous. Then Cai, Lin and He in \cite{CZ} proved that every topological gyrogroup is a rectifiable space. A series of results on topological gyrogroups have been obtained in \cite{AA2010,BZX,BZX2,LF,LF1,LF2}

By the further research of M\"{o}bius gyrogroups, Einstein gyrogroups, and Proper Velocity gyrogroups, Bao and Lin \cite{BL} found that all of them have an open neighborhood base at the identity element $0$ which is invariant under the groupoid automorphism with standard topology. Therefore, they posed the concept of strongly topological gyrogroups. At the same time, they proved that a strongly topological gyrogroup $G$ is feathered if and only if it contains a compact $L$-subgyrogroup $H$ such that the quotient space $G/H$ is metrizable, which implies that every feathered strongly topological gyrogroup is paracompact. Then they \cite{BL1} obtained that every $T_{0}$ strongly topological gyrogroup is completely regular and every strongly topological gyrogroup with a countable pseudocharacter is submetrizable. In \cite{BL2}, it was shown that all locally paracompact strongly topological gyrogroups are paracompact. By the study in \cite{BL3}, Bao and Lin studied the class of (strongly) topological gyrogroups with a non-measurable cardinality and they proved that every submaximal topological gyrogroup of non-measurable cardinality is strongly $\sigma$-discrete and every submaximal strongly topological gyrogroup of non-measurable cardinality is hereditarily paracompact. More important, they extended the cardinality of topological gyrogroups since they gave an example to show that for any cardinality $\kappa >\omega$, there exists a gyrogroup $G$ with subgyrogroup $H$ of the cardinality $\kappa$ such that $H$ is not a group, see \cite[Example 3.1]{BL3}. Moreover, when Bao and Lin researched the quotient spaces of strongly topological gyrogroups, they posed the concept of an admissible subgyrogroup generated from the invariant neighborhood base, and they proved in \cite{BL1} that if $H$ is an admissible subgyrogroup generated from $\mathscr{U}$ of a strongly topological gyrogroup $G$, where $\mathscr U$ is a symmetric neighborhood base at $0$, then the left coset space $G/H$ is submetrizable. Recently, Bao, Ling and Xu \cite{BLX} were continue to study the quotient spaces of strongly topological gyrogroups and achieved many well results combining generalized metric properties, such as $\aleph_{0}$-space, cosmic space, stratifiable space and so on. Bao, Zhang and Xu \cite{BZX1} showed that $G$ is a strongly countably complete strongly topological gyrogroup, then $G$ contains a closed, countably compact admissible subgyrogroup $P$ such that the quotient space $G/P$ is metrizable and the canonical homomorphism $\pi :G\rightarrow G/P$ is closed.

In topological gyrogroups, it was proved in \cite{CZ} that the first-countability and metrizability are equivalent, so it is natural to consider whether they are equivalent in quotient spaces of topological gyrogroups. In this paper, we show that if $(G,\tau ,\oplus)$ is a strongly topological gyrogroup with a symmetric neighborhood base $\mathscr U$ at $0$ and $H$ is an admissible subgyrogroup generated from $\mathscr U$, then the first-countability and metrizability of $G/H$ are also equivalent. Moreover, if the admissible subgyrogroup $H$ is neutral, we show that $G/H$ is Fr\'echet-Urysohn with an $\omega^{\omega}$-base if and only if it is metrizable. Finally, we investigate the cardinalities of the quotient spaces and prove that if $H$ is neutral, then $\pi\chi(G/H)=\chi(G/H)$ and $\pi\omega (G/H)=\omega (G/H)$.

\smallskip
\section{Preliminaries}
In this section, we introduce the necessary notations, terminologies and some facts about topological gyrogroups.

Throughout this paper, all topological spaces are assumed to be Hausdorff, unless otherwise is explicitly stated. Let $\mathbb{N}$ be the set of all positive integers. Let $X$ be a topological space and $A \subseteq X$ be a subset of $X$.
  The {\it closure} of $A$ in $X$ is denoted by $\overline{A}$ and the
  {\it interior} of $A$ in $X$ is denoted by $\mbox{Int}(A)$. The readers may consult \cite{AA, E, linbook} for notations and terminologies not explicitly given here.
\begin{definition}\cite{AW}
Let $G$ be a nonempty set, and let $\oplus: G\times G\rightarrow G$ be a binary operation on $G$. Then the pair $(G, \oplus)$ is called a {\it groupoid}. A function $f$ from a groupoid $(G_{1}, \oplus_{1})$ to a groupoid $(G_{2}, \oplus_{2})$ is called a {\it groupoid homomorphism} if $f(x\oplus_{1}y)=f(x)\oplus_{2} f(y)$ for any elements $x, y\in G_{1}$. Furthermore, a bijective groupoid homomorphism from a groupoid $(G, \oplus)$ to itself will be called a {\it groupoid automorphism}. We write $\mbox{Aut}(G, \oplus)$ for the set of all automorphisms of a groupoid $(G, \oplus)$.
\end{definition}

\begin{definition}\cite{UA}
Let $(G, \oplus)$ be a groupoid. The system $(G,\oplus)$ is called a {\it gyrogroup}, if its binary operation satisfies the following conditions:

\smallskip
(G1) There exists a unique identity element $0\in G$ such that $0\oplus a=a=a\oplus0$ for all $a\in G$.

\smallskip
(G2) For each $x\in G$, there exists a unique inverse element $\ominus x\in G$ such that $\ominus x \oplus x=0=x\oplus (\ominus x)$.

\smallskip
(G3) For all $x, y\in G$, there exists $\mbox{gyr}[x, y]\in \mbox{Aut}(G, \oplus)$ with the property that $x\oplus (y\oplus z)=(x\oplus y)\oplus \mbox{gyr}[x, y](z)$ for all $z\in G$.

\smallskip
(G4) For any $x, y\in G$, $\mbox{gyr}[x\oplus y, y]=\mbox{gyr}[x, y]$.
\end{definition}

Notice that a group is a gyrogroup $(G,\oplus)$ such that $\mbox{gyr}[x,y]$ is the identity function for all $x, y\in G$. The definition of a subgyrogroup is as follows.

\begin{definition}\cite{ST}
Let $(G,\oplus)$ be a gyrogroup. A nonempty subset $H$ of $G$ is called a {\it subgyrogroup}, denoted
by $H\leq G$, if the following statements hold:

\smallskip
(i) The restriction $\oplus| _{H\times H}$ is a binary operation on $H$, i.e. $(H, \oplus| _{H\times H})$ is a groupoid.

\smallskip
(ii) For any $x, y\in H$, the restriction of $\mbox{gyr}[x, y]$ to $H$, $\mbox{gyr}[x, y]|_{H}$ : $H\rightarrow \mbox{gyr}[x, y](H)$, is a bijective homomorphism.

\smallskip
(iii) $(H, \oplus|_{H\times H})$ is a gyrogroup.

\smallskip
Furthermore, a subgyrogroup $H$ of $G$ is said to be an {\it $L$-subgyrogroup} \cite{ST}, denoted
by $H\leq_{L} G$, if $\mbox{gyr}[a, h](H)=H$ for all $a\in G$ and $h\in H$.

\end{definition}

\begin{definition}\cite{AW}
A triple $(G, \tau, \oplus)$ is called a {\it topological gyrogroup} if the following statements hold:

\smallskip
(1) $(G, \tau)$ is a topological space.

\smallskip
(2) $(G, \oplus)$ is a gyrogroup.

\smallskip
(3) The binary operation $\oplus: G\times G\rightarrow G$ is jointly continuous while $G\times G$ is endowed with the product topology, and the operation of taking the inverse $\ominus (\cdot): G\rightarrow G$, i.e. $x\rightarrow \ominus x$, is also continuous.
\end{definition}

\begin{definition}{\rm (\cite{BL})}\label{d11}
Let $G$ be a topological gyrogroup. We say that $G$ is a {\it strongly topological gyrogroup} if there exists a neighborhood base $\mathscr U$ of $0$ such that, for every $U\in \mathscr U$, $\mbox{gyr}[x, y](U)=U$ for any $x, y\in G$. For convenience, we say that $G$ is a strongly topological gyrogroup with neighborhood base $\mathscr U$ at $0$.
\end{definition}

It was given that the M\"{o}bius gyrogroup with the standard topology is a strongly topological gyrogroup but not a topological group.

\begin{example}\cite{AW}\label{2lz1}
The M\"{o}bius gyrogroup with the standard topology is a strongly topological gyrogroup but not a topological group.
\end{example}

Let $\mathbb{D}$ be the complex open unit disk $\{z\in \mathbb{C}:|z|<1\}$ and equipped with the standard topology. In \cite[Example 2]{AW}, define a M\"{o}bius addition $\oplus _{M}: \mathbb{D}\times \mathbb{D}\rightarrow \mathbb{D}$ to be a function such that $$a\oplus _{M}b=\frac{a+b}{1+\bar{a}b}\ \mbox{for all}\ a, b\in \mathbb{D}.$$ Then $(\mathbb{D}, \oplus _{M})$ is a gyrogroup, and it follows from \cite[Example 2]{AW} that $$gyr[a, b](c)=\frac{1+a\bar{b}}{1+\bar{a}b}c\ \mbox{for any}\ a, b, c\in D.$$ For any $n\in\omega$, let $U_{n}=\{x\in \mathbb{D}: |x|\leq \frac{1}{n}\}$. Then, $\mathscr U=\{U_{n}: n\in \omega\}$ is a neighborhood base of $0$. Moreover, we observe that $|\frac{1+a\bar{b}}{1+\bar{a}b}|=1$. Therefore, we obtain that $gyr[x, y](U)\subset U$, for any $x, y\in \mathbb{D}$ and each $U\in \mathscr U$, then it follows that $gyr[x, y](U)=U$ by \cite[Proposition 2.6]{ST}. Hence, $(\mathbb{D}, \oplus _{M})$ is a strongly topological gyrogroup. However, $(\mathbb{D}, \oplus _{M})$ is not a group \cite[Example 2]{AW}.

It was proved in \cite[Example 3.2]{BL} that there exists a strongly topological gyrogroup which has an infinite L-subgyrogroup. They constructed a strongly topological gyrogroup by the production of a feathered non-metrizable topological group with a strongly topological gyrogroup with a non-trivial $L$-subgyrogroup. Indeed, every strongly topological gyrogroup has some subgyrogroups which are invariant under the groupoid automorphisms of the strongly topological gyrogroup, so these subgyrogroups are all $L$-subgyrogroups.

\begin{theorem}
Every strongly topological gyrogroup $G$ contains some subgyrogroups which are invariant under the groupoid automorphisms of $G$.
\end{theorem}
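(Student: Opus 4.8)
The plan is to prove this by explicitly constructing such subgyrogroups out of the strongly invariant neighborhood base $\mathscr U$ at $0$. The natural candidate for an invariant subgyrogroup is the subgyrogroup \emph{generated} by a single member $U \in \mathscr U$; since each $U$ satisfies $\mathrm{gyr}[x,y](U)=U$ for all $x,y\in G$, one expects the whole generated subgyrogroup to inherit this gyroautomorphism-invariance. So first I would fix an arbitrary $U\in\mathscr U$ and form $H=\langle U\rangle$, the smallest subset of $G$ that contains $U$ and is closed under $\oplus$ and under taking inverses; concretely $H$ is the union over $n\in\mathbb N$ of all elements obtainable as finite gyro-sums $u_1\oplus(u_2\oplus(\cdots\oplus u_n))$ with each $u_i\in U\cup(\ominus U)$.

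The key step is to verify that $H$ is genuinely invariant, i.e. that $\mathrm{gyr}[x,y](H)=H$ for every $x,y\in G$. Since each $\mathrm{gyr}[x,y]$ is a groupoid automorphism (by (G3)), it commutes with the operation $\oplus$ and with inversion. Hence it suffices to check that $\mathrm{gyr}[x,y]$ maps a generating set to itself: because $\mathrm{gyr}[x,y](U)=U$ and (using that automorphisms preserve inverses) $\mathrm{gyr}[x,y](\ominus U)=\ominus U$, the image of any finite gyro-sum of generators is again such a gyro-sum, so $\mathrm{gyr}[x,y](H)\subseteq H$; applying the same argument to $\mathrm{gyr}[x,y]^{-1}$ gives the reverse inclusion and hence equality. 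The remaining obligation is to confirm that $H$ is actually a subgyrogroup in the sense of the definition, namely that conditions (i)--(iii) hold. Condition (i) is immediate from closure under $\oplus$; for (ii) and (iii) one uses precisely the invariance just established, since $\mathrm{gyr}[a,b](H)=H$ forces the restriction $\mathrm{gyr}[a,b]|_H$ to be a bijection of $H$ onto itself, which both supplies the required bijective homomorphism in (ii) and lets one verify the gyrogroup axioms (G1)--(G4) internally on $H$.

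I expect the main obstacle to be the careful handling of the generated subgyrogroup $\langle U\rangle$: unlike in a group, the gyro-sum is nonassociative, so the set of finite gyro-products is more delicate to describe and one must be sure it is genuinely closed under $\oplus$ and inversion (using the left gyroassociative law (G3) and axiom (G4) to reduce reparenthesizations). Once the invariance $\mathrm{gyr}[x,y](H)=H$ is in hand, verifying the subgyrogroup axioms is essentially automatic, because invariance is exactly the hypothesis needed to guarantee that gyrations restrict to automorphisms of $H$. Note moreover that such an $H$ is automatically an $L$-subgyrogroup in the sense of the earlier definition, since $\mathrm{gyr}[a,h](H)=H$ holds for \emph{all} $a\in G$ and $h\in H$ as a special case of the invariance under \emph{all} gyrations. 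This recovers the remark preceding the statement that these invariant subgyrogroups are all $L$-subgyrogroups.
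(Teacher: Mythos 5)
Your proposal is correct and is essentially the paper's own argument: the paper likewise fixes $U\in\mathscr{U}$ and forms the subgyrogroup generated by $U$, realized concretely as $H=\bigcup_{n}U_{n}$ with $U_{0}=U$ and $U_{n+1}=\ominus(U_{n}\oplus U_{n})\cup(U_{n}\oplus U_{n})$, then proves $\mathrm{gyr}[x,y](H)=H$ by induction on $n$ using exactly your key observation that each $\mathrm{gyr}[x,y]$ is a groupoid automorphism fixing $U$ setwise. The only difference is presentational: the paper's increasing union makes closure under $\oplus$ and $\ominus$ (hence the subgyrogroup property, via the criterion of \cite[Proposition 4.1]{ST}) immediate, thereby sidestepping the nonassociative reparenthesization bookkeeping for nested gyro-sums that you flag as the main obstacle, although your abstract definition of $\langle U\rangle$ as the smallest $\oplus$- and $\ominus$-closed superset of $U$ would also suffice for the invariance argument.
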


\begin{proof}
Let $G$ be a strongly topological gyrogroup with a symmetric neighborhood base $\mathscr U$ at $0$. For arbitrary $U\in \mathscr{U}$, we know that $U=\ominus U$ and $0\in U$. Put $U_{0}=U$ and $U_{n}=\ominus (U_{n-1}\oplus U_{n-1})\cup (U_{n-1}\oplus U_{n-1})$ for any $n\in \mathbb{N}$. Set $H=\bigcup_{n\in \mathbb{N}}U_{n}$.

{\bf Claim 1}. $H$ is a subgyrogroup of $G$.

By the construction of $H$, it is clear that $H=\ominus H$. For arbitrrary $m,n\in H$, we know that $m,n\in U_{n}$, for some $n\in \mathbb{N}$. Then, $m\oplus n\in U_{n}\oplus U_{n}\subset U_{n+1}\subset H$, so $H$ is a subgyrogroup by \cite[Proposition 4.1]{ST}.

{\bf Claim 2}. $H$ is invariant under the groupoid automorphisms of $G$.

For any $x,y\in G$, we show that $gyr[x,y](U_{n})=U_{n}$ for any $n\in \mathbb{N}$. If $n=0$, it is obvious that $gyr[x,y](U)=U$ since $U\in \mathscr{U}$. Assume that if $n=k$, we have $gyr[x,y](U_{k})=U_{k}$. Then when $n=k+1$,
\begin{eqnarray}
gyr[x,y](U_{k+1})&=&gyr[x,y](\ominus (U_{k}\oplus U_{k})\cup (U_{k}\oplus U_{k}))\nonumber\\
&=&\ominus (gyr[x,y](U_{k})\oplus gyr[x,y](U_{k}))\cup (gyr[x,y](U_{k})\oplus gyr[x,y](U_{k}))\nonumber\\
&=&\ominus (U_{k}\oplus U_{k})\cup (U_{k}\oplus U_{k})\nonumber\\
&=&U_{k+1}\nonumber.
\end{eqnarray}

Therefore, we obtain that $gyr[x,y](H)=gyr[x,y](\bigcup_{n\in \mathbb{N}}U_{n})=\bigcup gyr[x,y](U_{n})=\bigcup_{n\in \mathbb{N}}U_{n}=H$ for any $x,y\in G$.
\end{proof}

Since the subgyrogroup $H$ is invariant under the groupoid automorphisms of $G$, it is natural that $H$ is an $L$-subgyrogroup of $G$. Then we recall the following concept of the coset space of a topological gyrogroup.

Let $(G, \tau, \oplus)$ be a topological gyrogroup and $H$ an $L$-subgyrogroup of $G$. It follows from \cite[Theorem 20]{ST} that $G/H=\{a\oplus H:a\in G\}$ is a partition of $G$. We denote by $\pi$ the mapping $a\mapsto a\oplus H$ from $G$ onto $G/H$. Clearly, for each $a\in G$, we have $\pi^{-1}\{\pi(a)\}=a\oplus H$.
Denote by $\tau (G)$ the topology of $G$. In the set $G/H$, we define a family $\tau (G/H)$ of subsets as follows: $$\tau (G/H)=\{O\subset G/H: \pi^{-1}(O)\in \tau (G)\}.$$

The following concept of an admissible subgyrogroup of a strongly topological gyrogroup was first introduced in \cite{BL1}, which plays an important role in this paper.

\begin{definition}\cite{BL1}
A subgyrogroup $H$ of a topological gyrogroup $G$ is called {\it admissible} if there exists a sequence $\{U_{n}:n\in \omega\}$ of open symmetric neighborhoods of the identity $0$ in $G$ such that $U_{n+1}\oplus (U_{n+1}\oplus U_{n+1})\subset U_{n}$ for each $n\in \omega$ and $H=\bigcap _{n\in \omega}U_{n}$.
\end{definition}

If $G$ is a strongly topological gyrogroup with a symmetric neighborhood base $\mathscr U$ at $0$ and each $U_{n}\in \mathscr U$, we say that the admissible topological subgyrogroup is generated from $\mathscr U$. It was shown in \cite{BL2} that if $G$ is a strongly topological gyrogroup with a symmetric neighborhood base $\mathscr U$ at $0$, then each admissible topological subgyrogroup $H$ generated from $\mathscr U$ is a closed $L$-subgyrogroup of $G$.

\section{Coset with admissible subgyrogroups}

In this section, we show that if $(G,\tau ,\oplus)$ is a strongly topological gyrogroup with a symmetric neighborhood base $\mathscr U$ at $0$ and $H$ is an admissible subgyrogroup generated from $\mathscr U$, then the first-countability and metrizability are equivalent in the quotient space $G/H$.

Moreover, it was proved in \cite{BZX2} that a topological gyrogroup is metrizable if and only if it is Fr\'echet-Urysohn with an $\omega^{\omega}$-base, then it is natural to consider whether it holds in the quotient spaces of topological gyrogroups. Here we just investigate the quotient spaces of strongly topological gyrogroups. Therefore, we pose the following problem.

\begin{question}
Let $(G,\tau ,\oplus)$ be a strongly topological gyrogroup with a symmetric neighborhood base $\mathscr U$ at $0$ and $H$ an admissible subgyrogroup generated from $\mathscr U$, whether the condition $G/H$ is metrizable if and only if it is Fr\'echet-Urysohn with an $\omega^{\omega}$-base also holds? In particular, what if $H$ is neutral?
\end{question}

Then we give an affirmative answer about the question when $H$ is neutral, see Corollary \ref{3tl1}. First, we recall some lemmas.

\begin{lemma}\cite{BL1}\label{yl1}
Let $(G, \tau, \oplus)$ be a strongly topological gyrogroup with a symmetric neighborhood base $\mathscr U$ at $0$. If $H$ is an admissible subgyrogroup generated from $\mathscr U$, then the coset space $G/H$ is a homogenous space.
\end{lemma}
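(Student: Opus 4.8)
The plan is to show that the left translations of $G$ descend to homeomorphisms of $G/H$ and that these already act transitively on the cosets. Recall first that for each $a\in G$ the left translation $L_{a}\colon G\rightarrow G$, $x\mapsto a\oplus x$, is a homeomorphism of $G$: it is continuous because $\oplus$ is jointly continuous, and the left cancellation law $\ominus a\oplus (a\oplus x)=x$ shows that $L_{\ominus a}$ is a two-sided inverse of $L_{a}$ (using $\ominus(\ominus a)=a$ for the other composition), which is continuous for the same reason. Thus $L_{\ominus a}=(L_{a})^{-1}$ and $L_{a}$ is a self-homeomorphism of $G$.

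The first real step is to verify that $L_{a}$ carries cosets onto cosets. By the gyroassociative law (G3), for every $b\in G$ and every $h\in H$ we have $a\oplus (b\oplus h)=(a\oplus b)\oplus \mbox{gyr}[a,b](h)$. The crucial point is that, since $G$ is a strongly topological gyrogroup with base $\mathscr U$ and $H=\bigcap_{n\in\omega}U_{n}$ with each $U_{n}\in\mathscr U$, we have $\mbox{gyr}[a,b](U_{n})=U_{n}$ for all $a,b\in G$, whence $\mbox{gyr}[a,b](H)=\bigcap_{n\in\omega}\mbox{gyr}[a,b](U_{n})=H$. Letting $h$ range over $H$ therefore yields $a\oplus (b\oplus H)=(a\oplus b)\oplus H$, so $L_{a}$ maps the coset $b\oplus H$ onto the single coset $(a\oplus b)\oplus H$.

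Consequently $L_{a}$ permutes the fibers of the canonical map $\pi\colon G\rightarrow G/H$, and so induces a well-defined bijection $\overline{L}_{a}\colon G/H\rightarrow G/H$ characterized by $\overline{L}_{a}\circ\pi=\pi\circ L_{a}$, namely $\overline{L}_{a}(b\oplus H)=(a\oplus b)\oplus H$. Since $\pi$ is a quotient map by the very definition of $\tau(G/H)$ and $\pi\circ L_{a}$ is continuous, $\overline{L}_{a}$ is continuous; applying the same argument to $\ominus a$ shows that $\overline{L}_{\ominus a}$ is continuous, and left cancellation gives $\overline{L}_{\ominus a}=(\overline{L}_{a})^{-1}$. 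Hence each $\overline{L}_{a}$ is a homeomorphism of $G/H$.

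Finally, given two points $a\oplus H$ and $b\oplus H$ of $G/H$, the composition $\overline{L}_{b}\circ\overline{L}_{\ominus a}$ is a homeomorphism of $G/H$ sending $a\oplus H\mapsto 0\oplus H=H\mapsto b\oplus H$, which establishes homogeneity. The step requiring the most care is the coset identity $a\oplus (b\oplus H)=(a\oplus b)\oplus H$: in a general topological gyrogroup equipped only with an $L$-subgyrogroup one controls $\mbox{gyr}[a,h]$ merely for $h\in H$, so it is precisely the strong invariance $\mbox{gyr}[a,b](H)=H$ for all $a,b\in G$ — available here through the strongly topological structure together with $H=\bigcap_{n\in\omega}U_{n}$ — that allows the left translations to descend to the quotient and thereby forces $G/H$ to be homogeneous.
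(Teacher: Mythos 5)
Your proof is correct and takes essentially the standard route: the paper itself gives no proof of this lemma (it is quoted from \cite{BL1}), and the argument there is the same as yours, namely that each left translation $L_{a}$ descends along $\pi$ to a homeomorphism $\overline{L}_{a}$ of $G/H$, the key point being $\mbox{gyr}[a,b](H)=H$ for all $a,b\in G$, which you correctly obtain from $H=\bigcap_{n\in\omega}U_{n}$ with $U_{n}\in\mathscr U$ together with the strong invariance of the base $\mathscr U$. Your handling of the remaining details --- well-definedness of $\overline{L}_{a}$, continuity via the quotient property of $\pi$, inverse via left cancellation and $\ominus(\ominus a)=a$, and transitivity via $\overline{L}_{b}\circ\overline{L}_{\ominus a}$ --- is sound.
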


\begin{lemma}\label{s}\cite{BL}
Let $G$ be a strongly topological gyrogroup with the symmetric neighborhood base $\mathscr{U}$ at $0$, and let $\{U_{n}: n\in\mathbb{N}\}$ and $\{V(m/2^{n}): n, m\in\mathbb{N}\}$ be two sequences of open neighborhoods satisfying the following conditions (1)-(5):

\smallskip
(1) $U_{n}\in\mathscr{U}$ for each $n\in \mathbb{N}$.

\smallskip
(2) $U_{n+1}\oplus U_{n+1}\subset U_{n}$, for each $n\in \mathbb{N}$.

\smallskip
(3) $V(1)=U_{0}$;

\smallskip
(4) For any $n\geq 1$, put $$V(1/2^{n})=U_{n}, V(2m/2^{n})=V(m/2^{n-1})$$ for $m=1,...,2^{n-1}$, and $$V((2m+1)/2^{n})=U_{n}\oplus V(m/2^{n-1})=V(1/2^{n})\oplus V(m/2^{n-1})$$ for each $m=1,...,2^{n-1}-1$;

\smallskip
(5) $V(m/2^{n})=G$ when $m>2^{n}$;

\smallskip
Then there exists a prenorm $N$ on $G$ that satisfies the following conditions:

\smallskip
(a) for any fixed $x, y\in G$, we have $N(\mbox{gyr}[x,y](z))=N(z)$ for any $z\in G$;

\smallskip
(b) for any $n\in \mathbb{N}$, $$\{x\in G: N(x)<1/2^{n}\}\subset U_{n}\subset\{x\in G: N(x)\leq 2/2^{n}\}.$$
\end{lemma}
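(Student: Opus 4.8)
The plan is to adapt the classical Birkhoff--Kakutani construction of a prenorm from a descending chain of symmetric neighborhoods to the gyrogroup setting, using the dyadic family $\{V(m/2^{n})\}$ of conditions (3)--(5) as the candidate level sets. Concretely, I would set
$$N(x)=\inf\{r: x\in V(r),\ r=m/2^{n}\ \text{dyadic}\},$$
which is well defined with values in $[0,\infty)$ because $V(m/2^{n})=G$ once $m>2^{n}$, and which gives $N(0)=0$ at once since $0\in U_{n}=V(1/2^{n})$ for every $n$. Nonnegativity is immediate, so the work is concentrated in the triangle inequality, in the symmetry $N(\ominus x)=N(x)$, and in the two conclusions (a) and (b).

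I would first dispose of (a), which is essentially free. Each $V(m/2^{n})$ is invariant under all gyrations: $V(1/2^{n})=U_{n}$ is gyr-invariant by Definition~\ref{d11}, and since every $\mbox{gyr}[x,y]$ is a groupoid automorphism, the recursion $V(2m/2^{n})=V(m/2^{n-1})$, $V((2m+1)/2^{n})=U_{n}\oplus V(m/2^{n-1})$ transports this invariance upward (exactly as in the Theorem proved above for the subgyrogroup $H$). Hence for fixed $x,y$ the bijection $\mbox{gyr}[x,y]$ maps each $V(m/2^{n})$ onto itself, so $z\in V(m/2^{n})\iff\mbox{gyr}[x,y](z)\in V(m/2^{n})$ and the two defining infima coincide, giving $N(\mbox{gyr}[x,y](z))=N(z)$.

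The engine of the proof, and the step I expect to be the main obstacle, is the chain inclusion
$$V(p)\oplus V(q)\subseteq V(p+q)\qquad\text{for all dyadic }p,q.$$
In a group this is the standard dyadic induction; here associativity is unavailable, and this is exactly where the gyrogroup hypotheses must be used. I would first prove the single-step inclusion $V(1/2^{n})\oplus V(m/2^{n})\subseteq V((m+1)/2^{n})$ by induction on $n$: the even case $m=2j$ is an equality built into the recursion, while the odd case $m=2j+1$ reduces to level $n-1$ after rewriting $U_{n}\oplus(U_{n}\oplus V(j/2^{n-1}))=(U_{n}\oplus U_{n})\oplus V(j/2^{n-1})\subseteq U_{n-1}\oplus V(j/2^{n-1})$, where the reassociation is legitimate because (G3) only introduces a gyration of the \emph{gyr-invariant} set $V(j/2^{n-1})$, and the contraction $U_{n}\oplus U_{n}\subseteq U_{n-1}$ is condition (2). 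It is essential that the small increment $V(1/2^{n})$ be added on the \emph{left}; the non-commutativity of $\oplus$ makes the right-hand increment genuinely worse. Peeling off the leftmost dyadic bit of $p$ one at a time, and reassociating each time via (G3) and the gyr-invariance of the $V$'s, upgrades the single-step inclusion to the full chain inclusion.

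Granting the chain inclusion, the remaining claims fall out. For the triangle inequality, if $x\in V(p)$ and $y\in V(q)$ then $x\oplus y\in V(p)\oplus V(q)\subseteq V(p+q)$, whence $N(x\oplus y)\le p+q$; taking infima over admissible $p,q$ gives $N(x\oplus y)\le N(x)+N(y)$. Monotonicity $V(p)\subseteq V(q)$ for $p\le q$ follows from the chain inclusion together with $0\in V(q-p)$, and then (b) is immediate: $N(x)<1/2^{n}$ forces $x\in V(p)$ for some $p<1/2^{n}$, hence $x\in V(1/2^{n})=U_{n}$, while $x\in U_{n}=V(1/2^{n})$ gives $N(x)\le 1/2^{n}\le 2/2^{n}$. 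The symmetry $N(\ominus x)=N(x)$ is the one point where a parallel argument is needed: using that each $U_{n}$ is symmetric together with the gyrogroup inverse law $\ominus(a\oplus b)=\mbox{gyr}[a,b](\ominus b\oplus\ominus a)$ and property (a), one controls $\ominus V(r)$ by the reversed dyadic family and checks that it yields the same infimum; I expect this bookkeeping, rather than any new idea, to be the most tedious part. This completes the construction of the desired prenorm $N$.
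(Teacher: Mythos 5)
Your overall skeleton --- the dyadic family $V(m/2^{n})$, the observation that every $V(m/2^{n})$ is gyr-invariant (which does give (a) and does license reassociation via (G3)), and the single-step inclusion $V(1/2^{n})\oplus V(m/2^{n})\subset V((m+1)/2^{n})$ proved by induction on $n$ from condition (2) --- is sound, and this is indeed how such a proof must begin. The genuine gap is precisely your ``engine'': the full chain inclusion $V(p)\oplus V(q)\subset V(p+q)$ for all dyadic $p,q$ does not follow from the single-step inclusion, and in fact it is false in general. Your peeling induction needs, at each step, $U_{n}\oplus V(r)\subset V(r+1/2^{n})$ with $r=p'+q$; but the single-step inclusion is only available when $r$ is expressible with denominator $2^{n}$, i.e.\ when $r$ has no binary digits finer than $1/2^{n}$. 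Since $q$ is arbitrary, $r$ generally does have finer digits, and the argument already collapses in the first nontrivial case: for $p=1/2$, $q=1/4$ the claim reads $U_{1}\oplus U_{2}\subset V(3/4)=U_{2}\oplus U_{1}$. Every Hausdorff topological group is a strongly topological gyrogroup with trivial gyrations, and in a non-abelian group one can choose symmetric $U_{n}$ satisfying (2) with $U_{1}U_{2}\not\subset U_{2}U_{1}$ (box neighborhoods in the affine group $\{x\mapsto ax+b\}$ whose two radii shrink at different rates do this); so no proof of the chain inclusion can exist, and consequently your $N(x)=\inf\{r: x\in V(r)\}$ need not satisfy the triangle inequality either.

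The repair is the classical Birkhoff--Kakutani device, and it is what produces the factor $2$ in conclusion (b) --- a factor your construction would improve to $1$, which is itself a warning sign. From the single-step inclusion one gets monotonicity $V(r)\subset V(s)$ for $r\leq s$, hence the rounding estimate $U_{n}\oplus V(r)\subset V(r+2/2^{n})$ for \emph{every} dyadic $r$ (round $r$ up to a dyadic with denominator $2^{n}$, losing at most $1/2^{n}$, then apply the single step). With $f(x)=\inf\{r: x\in V(r)\}$ one then defines not $N=f$ but $$N(x)=\sup_{y\in G}|f(x\oplus y)-f(y)|.$$ The supremum makes the triangle inequality provable without any additivity of the $V$'s: writing $(x\oplus z)\oplus y=x\oplus(z\oplus \mbox{gyr}[z,x](y))$ and using gyr-invariance of $f$ yields $N(x\oplus z)\leq N(x)+N(z)$; the substitution $y\mapsto x\oplus y$ together with left cancellation gives $N(\ominus x)=N(x)$; gyr-invariance of $f$ gives (a); and $N\geq f$ plus the rounding estimate give exactly the two inclusions of (b), the $2/2^{n}$ being the rounding loss. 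This is the route of the proof cited from \cite{BL}, which follows the prenorm construction of Arhangel'ski\v{\i} and Tkachenko, and it is the point at which your proposal needs to be rebuilt.
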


\begin{theorem}\label{h}
Suppose that $(G,\tau ,\oplus)$ is a strongly topological gyrogroup with a symmetric neighborhood base $\mathscr U$ at $0$ and $H$ is an admissible subgyrogroup generated from $\mathscr U$, if the quotient space $G/H$ is first-countable, then it is metrizable.
\end{theorem}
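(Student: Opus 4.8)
The plan is to metrize $G/H$ by pushing a gyration-invariant prenorm on $G$, produced by Lemma \ref{s}, down to the quotient; first-countability enters only to guarantee that this prenorm can be manufactured from a single \emph{countable} chain of neighborhoods whose $\pi$-images form a base at the point $\pi(0)$.

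First I would set up the chain. Since $H$ is admissible and generated from $\mathscr U$, it is a closed $L$-subgyrogroup; moreover, writing $H=\bigcap_{n}V_n$ with $V_n\in\mathscr U$ and recalling that every member of $\mathscr U$ is fixed by all gyrations, we get $\mathrm{gyr}[x,y](H)=\bigcap_n\mathrm{gyr}[x,y](V_n)=H$ for \emph{all} $x,y\in G$, a normality-type property that will be used repeatedly. Because $G/H$ is first-countable, fix a decreasing base $\{O_n:n\in\omega\}$ at $\pi(0)$. Each $\pi^{-1}(O_n)$ is an open neighborhood of $0$, so using that $\mathscr U$ is a base at $0$ together with the joint continuity of $\oplus$, I would inductively choose $U_n\in\mathscr U$ with $U_{n+1}\oplus U_{n+1}\subset U_n$ and $U_n\subset\pi^{-1}(O_n)$; since $\pi$ is open, the sets $\pi(U_n)$ are open, satisfy $\pi(U_n)\subset O_n$, and hence form a base at $\pi(0)$. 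Feeding $\{U_n\}$ into Lemma \ref{s} gives a prenorm $N$ on $G$ with total gyration-invariance $N(\mathrm{gyr}[x,y](z))=N(z)$ and the sandwich $\{z:N(z)<1/2^n\}\subset U_n\subset\{z:N(z)\le 2/2^n\}$.

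Next I would define, for $x,y\in G$,
$$\rho(\pi(x),\pi(y))=\inf_{h\in H}N(\ominus x\oplus(y\oplus h)),$$
and claim that $\rho$ is a metric on $G/H$ inducing $\tau(G/H)$. The verifications all hinge on two gyrogroup facts: the total invariance $N\circ\mathrm{gyr}[x,y]=N$ from Lemma \ref{s}(a), and the identity $\ominus(a\oplus x)\oplus(a\oplus y)=\mathrm{gyr}[a,x](\ominus x\oplus y)$ together with $\mathrm{gyr}[x,y](H)=H$. Concretely: reindexing $h$ through $H$ (legitimate because $H$ is a gyration-invariant subgyrogroup) shows $\rho$ does not depend on the chosen representatives and is invariant under the transitive left action $\pi(x)\mapsto\pi(a\oplus x)$ of $G$ on the homogeneous space $G/H$ (Lemma \ref{yl1}); applying the prenorm symmetry $N(\ominus w)=N(w)$ and then the inversion identity $\ominus(a\oplus b)=\mathrm{gyr}[a,b](\ominus b\ominus a)$, followed by (a), yields $\rho(\pi(x),\pi(y))=\rho(\pi(y),\pi(x))$; and the triangle inequality follows by choosing near-optimal $h$'s, splitting $\ominus x\oplus(y\oplus h)$ via the gyroassociative law, and discarding the resulting gyration before applying subadditivity of $N$ and (a). For positive-definiteness, if $\rho(\pi(x),\pi(y))=0$ then by the sandwich there are $h_j\in H$ with $y\oplus h_j\in x\oplus U_n$ eventually for every $n$, so $y\oplus h_j\to x$ and hence $x\in\overline{y\oplus H}=y\oplus H$ (the coset is closed because $H$ is closed), i.e.\ $\pi(x)=\pi(y)$.

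Finally I would match topologies. The sandwich shows that the $\rho$-ball of radius $1/2^n$ about $\pi(0)$ lies in $\pi(U_n)$ while $\pi(U_{n+2})$ lies in that ball, so the $\rho$-balls and the sets $\pi(U_n)$ are mutually cofinal and generate the same neighborhood filter at $\pi(0)$; since each $\{z:N(z)<1/2^n\}$ is open and $\pi$ is open, these balls are open in $\tau(G/H)$. The $L$-invariance of $\rho$ established above, combined with the transitivity of the action, then propagates the coincidence of the $\rho$-topology and $\tau(G/H)$ from $\pi(0)$ to every point, so $\rho$ metrizes $G/H$. The principal obstacle is the third paragraph: establishing symmetry, the triangle inequality and left-invariance of $\rho$ requires careful bookkeeping of the gyroautomorphisms, and each of these steps works only because the prenorm from Lemma \ref{s} is invariant under \emph{all} gyrations and because $H$ is fixed by all gyrations --- the two features that replace the associativity and the normality one would have in the classical group setting.
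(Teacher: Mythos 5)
Your plan diverges from the paper's proof at the choice of the chain, and that is exactly where it breaks. The paper feeds Lemma \ref{s} with the \emph{admissible} chain itself: $U_n\in\mathscr U$, $U_{n+1}\oplus(U_{n+1}\oplus U_{n+1})\subset U_n$ and, crucially, $\bigcap_n U_n=H$. With that choice the prenorm satisfies $N(x)=0$ exactly when $x\in H$, hence $N(x\oplus h)=N(x)$ for all $h\in H$, so $N$ is constant on left cosets and $\varrho(\pi(x),\pi(y))=N(\ominus x\oplus y)+N(\ominus y\oplus x)$ is well defined with no infimum at all; first-countability enters only afterwards, to compare the two topologies. You instead choose $U_n\in\mathscr U$ inside $\pi^{-1}(O_n)$ so that $\{\pi(U_n)\}$ is a base at $\pi(0)$. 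Nothing in that induction puts $H$ inside $U_n$, and in general it cannot: whether one can find members of $\mathscr U$ with $H\subset U_n\subset\pi^{-1}(O_n)$ is essentially the whole content of the theorem, not something one may assume. So $\bigcap_n U_n$ is a proper subset of $H$, and by the sandwich of Lemma \ref{s}(b), $N(h)\ge 1/2^n$ for every $h\in H\setminus U_n$; your $N$ is not zero on $H$ and is typically unbounded there. The infimum over $H$ is then carrying the entire burden of the construction.

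That burden is not carried by the reasons you cite (gyration-invariance of $N$ and of $H$). Reindexing $h\mapsto h_2\oplus\mathrm{gyr}[h_2,y](h)$ does show invariance when $y$ is replaced by $y\oplus h_2$, but replacing $x$ by $x\oplus h_1$ turns the set over which you take the infimum from $(\ominus x\oplus y)\oplus H$ into $\bigl(h''\oplus\mathrm{gyr}[\ominus(x\oplus h_1),x](\ominus x\oplus y)\bigr)\oplus H$ with $h''=\mathrm{gyr}[x,h_1](\ominus h_1)\in H$: a \emph{left} $H$-translate of (a gyration of) the original coset, and the discrepancy between the two infima is controlled only by $N(h'')$, which is not small. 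For the same reason $\rho$ is not visibly symmetric ($\rho(\pi(x),\pi(y))$ is an infimum of $N$ over left translates $h'\oplus(\ominus y\oplus x)$, while $\rho(\pi(y),\pi(x))$ runs over the right translates $(\ominus y\oplus x)\oplus H$), and the triangle inequality argument collapses: the gyro-splitting $\ominus a\oplus c=(\ominus a\oplus b)\oplus\mathrm{gyr}[\ominus a,b](\ominus b\oplus c)$ requires the \emph{same} middle point $b$ in both factors, whereas your two near-optimal choices use different representatives $y\oplus h_2$ and $y\oplus k_1$ of $\pi(y)$, and bridging them again costs $N$ of an element of $H$. (A secondary gap: in your positivity argument, $y\oplus h_j\in x\oplus U_j$ does not give $y\oplus h_j\to x$ in $G$, since $\{U_j\}$ is not a neighborhood base at $0$ in $G$ --- only $G/H$, not $G$, is assumed first-countable; this one can be repaired by passing to the quotient, but the well-definedness, symmetry and triangle inequality cannot be repaired by reindexing.) In short, the infimum is not a substitute for the identity $N|_H\equiv 0$; that identity is precisely what the admissible chain buys the paper, and it is incompatible with forcing $U_n\subset\pi^{-1}(O_n)$. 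Any correct proof along your lines must confront this tension directly rather than absorb it into a change of index.
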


\begin{proof}
Let $\{U_{n}:n\in \mathbb{N}\}$ be a sequence of symmetric open neighborhoods of the identity $0$ in $G$ satisfying $U_{n}\in \mathscr U$ and $U_{n+1}\oplus (U_{n+1}\oplus U_{n+1})\subset U_{n}$, for each $n\in \mathbb{N}$, and such that $H=\bigcap _{n\in \mathbb{N}}U_{n}$. By Lemma \ref{s}, there exists a continuous prenorm $N$ on $G$ which satisfies $$N(\mbox{gyr}[x,y](z))=N(z)$$ for any $x, y, z\in G$ and $$\{x\in G: N(x)<1/2^{n}\}\subset U_{n}\subset\{x\in G: N(x)\leq 2/2^{n}\},$$ for any $n\in \mathbb{N}$.

First, we show that $N(x)=0$ if and only if $x\in H$. If $N(x)=0$, then $$x\in \bigcap _{n\in \mathbb{N}}\{x\in G:N(x)<1/2^{n}\}\subset \bigcap _{n\in \mathbb{N}}U_{n}=H.$$ On the other hand, if $x\in H$, since $$H=\bigcap _{n\in \mathbb{N}}U_{n}\subset \bigcap _{n\in \mathbb{N}}\{x\in G:N(x)\leq 2/2^{n}\},$$ we have that $N(x)=0$.

We claim that $N(x\oplus h)=N(x)$ for every $x\in G$ and $h\in H$. Indeed, for every $x\in G$ and $h\in H$, $N(x\oplus h)\leq N(x)+N(h)=N(x)+0=N(x)$. Moreover, by the definition of $N$, we observe that $N(\mbox{gyr}[x,y](z))=N(z)$ for every $x,y,z\in G$. Since $H$ is a $L$-subgyrogroup, it follows that
\begin{eqnarray}
N(x)&=&N((x\oplus h)\oplus \mbox{gyr}[x,h](\ominus h))\nonumber\\
&\leq&N(x\oplus h)+N(\mbox{gyr}[x,h](\ominus h))\nonumber\\
&=&N(x\oplus h)+N(\ominus h)\nonumber\\
&=&N(x\oplus h).\nonumber
\end{eqnarray}
Therefore, $N(x\oplus h)=N(x)$ for every $x\in G$ and $h\in H$.

Now define a function $d$ from $G\times G$ to $\mathbb{R}$ by $d(x,y)=|N(x)-N(y)|$ for all $x,y\in G$. Obviously, $d$ is continuous. We show that $d$ is a pseudometric.

\smallskip
(1) For any $x, y\in G$, if $x=y$, then $d(x, y)=|N(x)-N(y)|=0$.

\smallskip
(2) For any $x, y\in G$, $d(y, x)=|N(y)-N(x)|=|N(x)-N(y)|=d(x, y)$.

\smallskip
(3) For any $x, y, z\in G$, we have
\begin{eqnarray}
d(x, y)&=&|N(x)-N(y)|\nonumber\\
&=&|N(x)-N(z)+N(z)-N(y)|\nonumber\\
&\leq&|N(x)-N(z)|+|N(z)-N(y)|\nonumber\\
&=&d(x, z)+d(z, y).\nonumber
\end{eqnarray}

If $x'\in x\oplus H$ and $y'\in y\oplus H$, then there exist $h_{1},h_{2}\in H$ such that $x'=x\oplus h_{1}$ and $y'=y\oplus h_{2}$, then $$d(x', y')=|N(x\oplus h_{1})-N(y\oplus h_{2})|=|N(x)-N(y)|=d(x, y).$$ This enables us to define a function $\varrho $ on $G/H\times G/H$ by $$\varrho (\pi _{H}(x),\pi _{H}(y))=d(\ominus x\oplus y, 0)+d(\ominus y\oplus x, 0)$$ for any $x, y\in G$.

It is obvious that $\varrho $ is continuous, and we verify that $\varrho $ is a metric on $G/H$.

\smallskip
(1) Obviously, for any $x, y\in G$, then
\begin{eqnarray}
\varrho (\pi _{H}(x),\pi _{H}(y))=0&\Leftrightarrow&d(\ominus x\oplus y, 0)=d(\ominus y\oplus x, 0)=0\nonumber\\
&\Leftrightarrow&N(\ominus x\oplus y)=N(\ominus y\oplus x)=0\nonumber\\
&\Leftrightarrow&\ominus x\oplus y\in H\ \mbox{and}\ \ominus y\oplus x\in H\nonumber\\
&\Leftrightarrow&y\in x\oplus H\ \mbox{and}\ x\in y\oplus H\nonumber\\
&\Leftrightarrow&\pi _{H}(x)=\pi _{H}(y).\nonumber
\end{eqnarray}

\smallskip
(2) For every $x,y\in G$, it is obvious that $\varrho (\pi _{H}(y), \pi _{H}(x))=\varrho (\pi _{H}(x),\pi _{H}(y))$.

\smallskip
(3) For every $x, y, z\in G$, it follows from \cite[Theorem 2.11]{UA2005} that
\begin{eqnarray}
\varrho (\pi _{H}(x),\pi _{H}(y))&=&N(\ominus x\oplus y)+N(\ominus y\oplus x)\nonumber\\
&=&N((\ominus x\oplus z)\oplus \mbox{gyr}[\ominus x,z](\ominus z\oplus y))\nonumber\\
&&+N((\ominus y\oplus z)\oplus \mbox{gyr}[\ominus y,z](\ominus z\oplus x))\nonumber\\
&\leq&N(\ominus x\oplus z)+N(\mbox{gyr}[\ominus x,z](\ominus z\oplus y))\nonumber\\
&&+N(\ominus y\oplus z)+N(\mbox{gyr}[\ominus y,z](\ominus z\oplus x))\nonumber\\
&=&N(\ominus x\oplus z)+N(\ominus z\oplus y)+N(\ominus y\oplus z)+N(\ominus z\oplus x)\nonumber\\
&=&d(\ominus x\oplus z, 0)+d(\ominus z\oplus x, 0)+d(\ominus z\oplus y, 0)+d(\ominus y\oplus z, 0)\nonumber\\
&=&\varrho (\pi _{H}(x),\pi _{H}(z))+\varrho (\pi _{H}(z),\pi _{H}(y)).\nonumber
\end{eqnarray}

Then we verify that $\varrho$ generates the quotient topology of the space $G/H$. Given any points $x\in G$, $y\in G/H$ and any $\varepsilon>0$, we define open balls, $$B(x, \varepsilon)=\{x'\in G: d(x',x)<\varepsilon\}$$ and $$B^{*}(y, \varepsilon)=\{y'\in G/H: \varrho (y',y)<\varepsilon\}$$ in $G$ and $G/H$, respectively. Obviously, if $x\in G$ and $y=\pi _{H}(x)$, then we have $B(x, \varepsilon)=\pi ^{-1}_{H}(B^{*}(y, \varepsilon))$. Therefore, the topology generated by $\varrho$ on $G/H$ is coarser than the quotient topology.

Moreover, it follows from Lemma \ref{yl1} that the quotient space $G/H$ is homogenous. Since $G/H$ is first-countable, there exists a countable base $\mathcal{V}=\{V_{n}:n\in \mathbb{N}\}$ at $\pi (0)$ in $G/H$. As $\pi$ is an open and continuous mapping from $G$ onto $G/H$ by \cite[Theorem 3.7]{BL}, $\pi(W)$ is an open neighborhood of $\pi(0)$ for arbitrary open neighborhood $W$ of $\pi^{-1}(\pi(0))$. We can find $V_{m}\in \mathcal{V}$ such that $\pi(0)\subset V_{m}\subset \pi(W)$. Then $\pi^{-1}(\pi(0))\subset \pi^{-1}(V_{m})\subset W$. Then $H$ has a countable character in $G$. Suppose that the preimage $O=\pi ^{-1}_{H}(Q)$ is open in $G$, where $Q$ is a non-empty subset of $G/H$. For every $y\in Q$, we have $\pi ^{-1}_{H}(y)=x\oplus H\subset O$, where $x$ is an arbitrary point of the fiber $\pi ^{-1}_{H}(y)$. Since $\{\pi^{-1}(V_{n}): n\in \omega\}$ is a base for $G$ at $H$, there exists $n\in \omega$ such that $x\oplus \pi^{-1}(V_{n})\subset O$. Then there exists $\delta>0$ such that $B(x, \delta)\subset x\oplus \pi^{-1}(V_{n})$. Therefore, we have $\pi ^{-1}_{H}(B^{*}(y, \delta))=B(x, \delta)\subset x\oplus \pi^{-1}(V_{n})\subset O$. It follows that $B^{*}(y, \delta)\subset Q$. So the set $Q$ is the union of a family of open balls in $(G/H, \varrho)$. Hence, $Q$ is open in $(G/H, \varrho)$, which proves that the metric and quotient topologies on $G/H$ coincide.
\end{proof}

\begin{corollary}
Let $(G,\tau ,\oplus)$ be a strongly topological gyrogroup with a symmetric neighborhood base $\mathscr U$ at $0$ and $H$ an admissible subgyrogroup generated from $\mathscr U$. Then $G/H$ is metrizable if and only if it is first-countable.
\end{corollary}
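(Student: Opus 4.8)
The plan is to prove the biconditional by establishing the two implications separately, one of which is already in hand. The forward direction—that metrizability of $G/H$ implies first-countability—is a standard fact of general topology requiring nothing special about gyrogroups: in any metric space $(X,\rho)$, the collection of open balls $\{B(x,1/n):n\in\mathbb{N}\}$ is a countable local base at each point $x$. Since, when $G/H$ is metrizable, its metric topology coincides with the quotient topology, this direction follows immediately and needs no appeal to the structure of $G$ or the hypotheses on $\mathscr U$ and $H$.

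For the reverse direction—that first-countability of $G/H$ forces metrizability—I would simply invoke Theorem \ref{h}. Under the standing hypotheses, namely that $(G,\tau,\oplus)$ is a strongly topological gyrogroup with symmetric neighborhood base $\mathscr U$ at $0$ and $H$ is an admissible subgyrogroup generated from $\mathscr U$, Theorem \ref{h} asserts exactly that first-countability of $G/H$ yields metrizability. Its proof already does the substantive work: Lemma \ref{s} supplies a continuous prenorm $N$ with $N(\mathrm{gyr}[x,y](z))=N(z)$, from which one builds the continuous metric $\varrho$ on $G/H$, and then the homogeneity of $G/H$ from Lemma \ref{yl1} together with the assumed countable base at $\pi(0)$ is used to verify that $\varrho$ generates the quotient topology.

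Putting the two implications together yields the equivalence claimed in the corollary. I do not expect any genuine obstacle here, since all the difficulty is concentrated in Theorem \ref{h}; the corollary is essentially a repackaging of that theorem with its elementary converse, and the proof amounts to citing Theorem \ref{h} for the nontrivial direction and the first-countability of metric spaces for the trivial one.
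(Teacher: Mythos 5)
Your proposal is correct and matches the paper's (implicit) reasoning exactly: the paper states this corollary immediately after Theorem \ref{h} with no written proof, precisely because the nontrivial direction is Theorem \ref{h} verbatim and the converse is the standard fact that metrizable spaces are first-countable.
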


\begin{corollary}
Let $(G,\tau ,\oplus)$ be a strongly topological gyrogroup with a symmetric neighborhood base $\mathscr U$ at $0$ and $H$ an admissible subgyrogroup generated from $\mathscr U$. If $H$ has countable character in $G$, then the quotient space $G/H$ is metrizable.
\end{corollary}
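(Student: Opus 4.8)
The plan is to reduce the statement to Theorem~\ref{h}: it suffices to prove that the hypothesis ``$H$ has countable character in $G$'' forces the quotient space $G/H$ to be first-countable, whence metrizability is immediate from Theorem~\ref{h} (equivalently, the preceding corollary). Recall that $\pi(0)=0\oplus H=H$ and that the fiber over this point is exactly $\pi^{-1}(\pi(0))=H$, since $a\oplus H=H$ if and only if $a\in H$. Thus a neighborhood base of $H$ in $G$ is the natural object to push forward to $G/H$.

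First I would fix a countable family $\{W_n:n\in\omega\}$ of open neighborhoods of $H$ in $G$ witnessing the countable character, so that every open set $W$ with $H\subseteq W$ contains some $W_n$. Since $\pi$ is open and continuous by \cite[Theorem 3.7]{BL}, each $\pi(W_n)$ is an open neighborhood of $\pi(0)$ in $G/H$, as $0\in H\subseteq W_n$. I claim $\{\pi(W_n):n\in\omega\}$ is a neighborhood base at $\pi(0)$. Indeed, given any open $O\subseteq G/H$ with $\pi(0)\in O$, the preimage $\pi^{-1}(O)$ is open in $G$ by the definition of the quotient topology and contains $\pi^{-1}(\pi(0))=H$; hence $W_n\subseteq\pi^{-1}(O)$ for some $n$, and applying $\pi$ together with its surjectivity yields $\pi(W_n)\subseteq\pi(\pi^{-1}(O))=O$. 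So $G/H$ has countable character at $\pi(0)$.

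Next, since the coset space $G/H$ is homogeneous by Lemma~\ref{yl1}, first-countability at the single point $\pi(0)$ propagates via the witnessing homeomorphisms to every point, and $G/H$ is first-countable. Finally Theorem~\ref{h} converts first-countability into metrizability, which completes the argument.

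I do not expect a genuine obstacle here: all the substantive work---constructing the compatible metric $\varrho$ from the prenorm $N$ of Lemma~\ref{s}---is already carried out inside Theorem~\ref{h}. The only points requiring care are bookkeeping, namely that the fiber over $\pi(0)$ is precisely $H$ (so that a neighborhood of $H$ in $G$ corresponds to a neighborhood of $\pi(0)$), that $\pi$ is open so that the images $\pi(W_n)$ remain open, and that $\pi(\pi^{-1}(O))=O$ by surjectivity. Each of these is immediate from the setup, so the corollary is essentially a repackaging of Theorem~\ref{h} together with the homogeneity of $G/H$.
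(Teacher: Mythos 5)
Your proposal is correct and follows essentially the same route as the paper: push the countable neighborhood base of $H$ in $G$ forward through the open quotient map $\pi$ to get a countable base at $\pi(0)$ (using that $\pi^{-1}(O)$ is open and contains $\pi^{-1}(\pi(0))=H$), then invoke the homogeneity of $G/H$ from Lemma~\ref{yl1} to conclude first-countability, and finish with Theorem~\ref{h}. The only cosmetic difference is that you explicitly justify openness of the images $\pi(W_n)$ and the identity $\pi(\pi^{-1}(O))=O$, which the paper leaves implicit.
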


\begin{proof}
Since $H$ has countable character in $G$, we can fix a family $\{U_{n}:n\in \mathbb{N}\}$ of open sets such that $H\subset \bigcap_{n\in \mathbb{N}} U_{n}$ and for every open subset $U$ with $H\subset U$, there exists $n\in \mathbb{N}$ such that $H\subset U_{n}\subset U$. Let $\pi$ be the natural homomorphism from $G$ to $G/H$. We claim that $\{ \pi(U_{n}):n\in \mathbb{N}\}$ is a neighborhood base at $\pi(0)$ in $G/H$. Indeed, for every open neighborhood $V$ of $\pi(0)$ in $G/H$, we have that $H\subset \pi^{-1}(V)$. Obviously, $\pi^{-1}(V)$ is open in $G$. Hence, there exists $n\in \mathbb{N}$ such that $H\subset U_{n}\subset \pi^{-1}(V)$. Thus $\pi (U_{n})\subset V$. Consequently,$\{\pi (U_{n}):n\in \mathbb{N}\}$ is a neighborhood base at $\pi(0)$ in $G/H$. Moreover, it follows from Lemma \ref{yl1} that the quotient space $G/H$ is homogenous. Therefore, $G/H$ is first-countable and hence metrizable by Theorem \ref{h}.
\end{proof}

An $L$-subgyrogroup $H$ of a topological gyrogroup $G$ is called {\it neutral} if for every open neighborhood $U$ of $0$ in $G$, there exists an open neighborhood $V$ of $0$ such that $H\oplus V\subset U\oplus H$ (equivalently, $V\oplus H\subset H\oplus U$).

\begin{lemma}\label{4yl1}
Let $(G,\tau ,\oplus)$ be a strongly topological gyrogroup with a symmetric neighborhood base $\mathscr U$ at $0$ and $H$ an admissible subgyrogroup generated from $\mathscr U$. If $H$ is neutral and the sequences $\{a_{n}\}_{n\in \mathbb{N}}$ and $\{b_{n}\}_{n\in \mathbb{N}}$ satisfy $\{\pi (a_{n})\}_{n\in \mathbb{N}}$ and $\{\pi (b_{n})\}_{n\in \mathbb{N}}$ both converging to $\pi (0)$, then $\{\pi (\ominus a_{n})\}_{n\in \mathbb{N}}$ and $\{\pi (a_{n}\oplus b_{n})\}_{n\in \mathbb{N}}$ both converge to $\pi (0)$.
\end{lemma}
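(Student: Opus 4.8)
The plan is to prove the two convergence claims separately, after first recording a convenient reformulation of convergence to $\pi(0)$ in $G/H$. Since $\pi$ is open and continuous (\cite[Theorem 3.7]{BL}) and $\mathscr U$ is a base at $0$, the family $\{\pi(U):U\in\mathscr U\}$ is a neighborhood base at $\pi(0)$, and for each $U\in\mathscr U$ one checks $\pi^{-1}(\pi(U))=U\oplus H$ (the left cosets of $H$ partition $G$). Hence a sequence $\{\pi(x_{n})\}$ converges to $\pi(0)$ if and only if for every $U\in\mathscr U$ one has $x_{n}\in U\oplus H$ for all but finitely many $n$. I will also use repeatedly the identity $\mbox{gyr}[x,y](U\oplus H)=U\oplus H$, valid for every $U\in\mathscr U$ and $x,y\in G$; it follows because $\mbox{gyr}[x,y]$ is a groupoid automorphism fixing both $U$ (as $U\in\mathscr U$) and $H$ (as $H$ is invariant under the gyroautomorphisms of $G$).

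For the inverse, fix $U\in\mathscr U$. Neutrality of $H$ produces a neighborhood $V$ of $0$ with $H\oplus V\subset U\oplus H$, and I shrink $V$ to some $V'\in\mathscr U$ with $V'\subset V$. By the reformulation, $a_{n}\in V'\oplus H$ eventually, so I write $a_{n}=v\oplus h$ with $v\in V'$ and $h\in H$. The gyrogroup inversion law gives $\ominus a_{n}=\ominus(v\oplus h)=\mbox{gyr}[v,h]((\ominus h)\oplus(\ominus v))$. Since $V'$ is symmetric and $\ominus h\in H$, we have $(\ominus h)\oplus(\ominus v)\in H\oplus V'\subset H\oplus V\subset U\oplus H$; applying $\mbox{gyr}[v,h]$ and the displayed invariance keeps the result in $U\oplus H$. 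Thus $\ominus a_{n}\in U\oplus H$ eventually, i.e. $\pi(\ominus a_{n})\to\pi(0)$.

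For the sum, fix $U\in\mathscr U$, choose $U_{1},W\in\mathscr U$ with $U_{1}\oplus W\subset U$, apply neutrality to $W$ to get $V$ with $H\oplus V\subset W\oplus H$, and fix $U_{2}\in\mathscr U$ with $U_{2}\subset V$. Eventually $a_{n}=u_{1}\oplus h_{1}\in U_{1}\oplus H$ and $b_{n}\in U_{2}\oplus H$. The strategy is to push every $H$-factor to the right, reducing $a_{n}\oplus b_{n}$ to an element of $(U_{1}\oplus W)\oplus H\subset U\oplus H$. Concretely, right gyroassociativity gives $a_{n}\oplus b_{n}=u_{1}\oplus(h_{1}\oplus\mbox{gyr}[h_{1},u_{1}]b_{n})$, and $\mbox{gyr}[h_{1},u_{1}]b_{n}\in U_{2}\oplus H$ by the invariance above; writing this as $u_{2}'\oplus h_{2}'$ and expanding $h_{1}\oplus(u_{2}'\oplus h_{2}')$ by left gyroassociativity turns the middle factor into $(h_{1}\oplus u_{2}')\oplus h'$ with $h'\in H$. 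Here neutrality enters decisively: $h_{1}\oplus u_{2}'\in H\oplus U_{2}\subset W\oplus H$, so after absorbing the trailing $H$-factors (each gyration of an element of $H$ again lies in $H$, and $H$ is closed under $\oplus$) we obtain $h_{1}\oplus\mbox{gyr}[h_{1},u_{1}]b_{n}\in W\oplus H$. One final left-gyroassociative step yields $a_{n}\oplus b_{n}\in(U_{1}\oplus W)\oplus H\subset U\oplus H$, whence $\pi(a_{n}\oplus b_{n})\to\pi(0)$.

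The main obstacle is the non-associativity in the sum: every regrouping produces a gyration, so the argument only goes through because the two rigidities of the situation conspire --- the base elements of $\mathscr U$ and the subgyrogroup $H$ are fixed by all gyrations, and $H$ is closed under the operation --- which together let each stray $H$-factor be shuffled to the right without enlarging the neighborhood part. Neutrality is the one genuinely new ingredient, used exactly to commute an $H$-factor past a small neighborhood in the crucial middle step; the remaining work is careful but routine gyrogroup calculation.
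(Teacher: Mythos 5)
Your proof is correct and follows essentially the same route as the paper's: both reduce convergence to eventual membership in sets $U\oplus H$, treat the inverse via the identity $\ominus(v\oplus h)=\mbox{gyr}[v,h](\ominus h\ominus v)$ combined with gyration-invariance of $H$ and of the members of $\mathscr U$, and treat the sum by gyroassociative regrouping plus a single application of neutrality to push every $H$-factor to the right. The only difference is bookkeeping --- the paper decomposes both $a_{n}$ and $b_{n}$ immediately and lands in $((U\oplus U)\oplus H)\oplus H$ with $\pi(U\oplus U)\subset O$, while you keep $b_{n}$ whole and gyrate it inside first --- which does not change the substance of the argument.
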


\begin{proof}
By the hypothesis, $G$ is a strongly topological gyrogroup with a symmetric neighborhood base $\mathscr U$ at $0$ and $H$ is an admissible subgyrogroup generated from $\mathscr U$, so $gyr[x,y](H)=H$ for each $x,y\in G$ by the proof of \cite[Theorem 3.2]{BL2}. Let $O$ be an open neighborhood of $\pi (0)$ in $G/H$. There exists $U\in \mathscr U$ such that $\pi (U\oplus U)\subset O$. Since $H$ is neutral, we can find $V\in \mathscr U$ with $H\oplus V\subset U\oplus H$.

First, we show that $\{\pi (\ominus a_{n})\}_{n\in \mathbb{N}}$ converges to $\pi (0)$.

Since $\{\pi (a_{n})\}_{n\in \mathbb{N}}$ converges to $\pi (0)$, there exists $m\in \mathbb{N}$ such that $\pi (a_{n})\in \pi (V)$ for each $n\geq m$, that is, $a_{n}\in V\oplus H$ for each $n\leq m$, then there exists $v_{n}\in V$ and $h_{n}\in H$ such that $a_{n}=v_{n}\oplus h_{n}$. Then $$\ominus a_{n}=\ominus (v_{n}\oplus h_{n})=gyr[v_{n},h_{n}](\ominus h_{n}\ominus v_{n})\in gyr[v_{n},h_{n}](H\oplus V)=H\oplus V\subset U\oplus H.$$ Therefore, $\pi (\ominus a_{n})\in \pi (U\oplus H)=\pi (U)\subset \pi (U\oplus U)\subset O$ for each $n\geq m$. Thus, $\{\pi (\ominus a_{n})\}_{n\in \mathbb{N}}$ converges to $\pi (0)$.

Then we show that $\{\pi (a_{n}\oplus b_{n})\}_{n\in \mathbb{N}}$ converges to $\pi (0)$.

Since $\mathscr U$ is a symmetric neighborhood base at $0$ in $G$, there exists $W\in \mathscr U$ with $W\subset V\cap U$. As $\{\pi (a_{n})\}_{n\in \mathbb{N}}$ and $\{\pi (b_{n})\}_{n\in \mathbb{N}}$ both converge to $\pi (0)$, we can find $m\in \mathbb{N}$ such that $\pi (a_{n})\in \pi (W)$ and $\pi (b_{m})\in \pi (W)$ for each $n\geq m$, that is, $a_{n}\in W\oplus H$ and $b_{n}\in W\oplus H$. Then there are $w_{n_{1}},w_{n_{2}}\in W$ and $h_{n_{1}},h_{n_{2}}\in H$ such that $a_{n}=w_{n_{1}}\oplus h_{n_{1}}$ and $b_{n}=w_{n_{2}}\oplus h_{n_{2}}$. Then

\begin{eqnarray}
a_{n}\oplus b_{n}&=&(w_{n_{1}}\oplus h_{n_{1}})\oplus (w_{n_{2}}\oplus h_{n_{2}})\nonumber\\
&=&((w_{n_{1}}\oplus h_{n_{1}})\oplus w_{n_{2}})\oplus gyr[w_{n_{1}}\oplus h_{n_{1}},w_{n_{2}}](h_{n_{2}})\nonumber\\
&=&(w_{n_{1}}\oplus (h_{n_{1}}\oplus gyr[h_{n_{1}},w_{n_{1}}](w_{n_{2}})))\oplus gyr[w_{n_{1}}\oplus h_{n_{1}},w_{2}](h_{2})\nonumber\\
&\in&(W\oplus (H\oplus W))\oplus H\nonumber\\
&\subset&(U\oplus (U\oplus H))\oplus H\nonumber\\
&\subset&((U\oplus U)\oplus gyr[U,U](H))\oplus H\nonumber\\
&=&((U\oplus U)\oplus H)\oplus H.\nonumber
\end{eqnarray}

Therefore, we obtain that $\pi (a_{n}\oplus b_{n})\in \pi (((U\oplus U)\oplus H)\oplus H)=\pi (U\oplus U)\subset O$ for each $n\in \mathbb{N}$. Hence, $\{\pi (a_{n}\oplus b_{n})\}_{n\in \mathbb{N}}$ converges to $\pi (0)$.
\end{proof}

\begin{definition}\cite{FS}
Let $X$ be a topological space. A space is called Fr\'echet-Urysohn at a point $x\in X$ if for every $A\subset X$ with $x\in \overline{A}\subset X$ there is a sequence $\{x_{n}\}_{n\in \mathbb{N}}$ in $A$ such that $\{x_{n}\}_{n\in \mathbb{N}}$ converges to $x$ in $X$. A space is called Fr\'echet-Urysohn if it is Fr\'echet-Urysohn at every point $x\in X$.
\end{definition}

\begin{definition}\cite{CZ1}
A topological space $X$ is called a {\it strong $\alpha_{4}$-space} if for any subset $\{x_{m,n}:m,n\in \mathbb{N}\}\subset X$ with $lim_{n\rightarrow \infty}x_{m,n}=x\in X$ for each $m\in \mathbb{N}$, there are strictly increasing sequences of natural numbers $\{i_{k}\}_{k\in \mathbb{N}}$ and $\{j_{k}\}_{k\in \mathbb{N}}$ such that $lim _{k\rightarrow \infty}x_{i_{k},j_{k}}=x$.
\end{definition}

\begin{theorem}\label{4dl1}
Let $(G,\tau ,\oplus)$ be a strongly topological gyrogroup with a symmetric neighborhood base $\mathscr U$ at $0$ and $H$ an admissible subgyrogroup generated from $\mathscr U$. If $H$ is neutral and $G/H$ is Fr\'echet-Urysohn, then $G/H$ is a strong $\alpha_{4}$-space.
\end{theorem}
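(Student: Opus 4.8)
The plan is to exploit the homogeneity of $G/H$ together with the action of gyrogroup translations on the quotient, so as to replace the given array by a perturbed one whose rows no longer accumulate at the base point. This will force any Fr\'echet--Urysohn witness to spread across infinitely many rows, after which Lemma \ref{4yl1} and the strongly topological structure let me recover a genuine diagonal for the original array.

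First I would reduce to the base point. By Lemma \ref{yl1} the space $G/H$ is homogeneous, so it suffices to treat $x=\pi(0)$; discarding the trivial situation in which almost all entries equal $\pi(0)$, I may assume each row $\{x_{m,n}\}_{n}$ is a nontrivial sequence of distinct points converging to $\pi(0)$, and I fix lifts $a_{m,n}\in G$ with $\pi(a_{m,n})=x_{m,n}$. The structural fact I rely on is that, since $\mathrm{gyr}[u,v](H)=H$ for all $u,v\in G$ (as established in the proof of Lemma \ref{4yl1}), each $c\in G$ induces a well-defined left translation $\lambda_{c}\colon \pi(z)\mapsto \pi(c\oplus z)$ on $G/H$, which is a homeomorphism with $\lambda_{c}(\pi(0))=\pi(c)$. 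I also use throughout that $\{\pi(U):U\in\mathscr U\}$ is a neighborhood base at $\pi(0)$, since $\pi$ is open and continuous.

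Next I would build the perturbed set. Choose $c_{m}\in G$ with $\pi(c_{m})\to\pi(0)$ and $\pi(c_{m})\neq\pi(0)$ (possible along the first row, as $\pi(0)$ is nonisolated), and put
$$V=\{\pi(c_{m}\oplus a_{m,n}) : n>m,\ m\in\mathbb N\}.$$
For fixed $m$, continuity of $\lambda_{c_{m}}$ gives $\pi(c_{m}\oplus a_{m,n})\to \lambda_{c_{m}}(\pi(0))=\pi(c_{m})$ as $n\to\infty$, so the $m$-th perturbed row converges to $\pi(c_{m})\neq\pi(0)$; as only finitely many of its entries can equal $\pi(0)$, deleting those I may assume $\pi(0)$ lies in the closure of no finite union of perturbed rows. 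On the other hand $\pi(c_{m})\in\overline{V}$ for every $m$ and $\pi(c_{m})\to\pi(0)$, whence $\pi(0)\in\overline{V}$. Applying the Fr\'echet--Urysohn property at $\pi(0)$ yields a sequence $S\subseteq V$ with $S\to\pi(0)$; since $\overline{\bigcup_{m\in F}(\text{perturbed row }m)}$ omits $\pi(0)$ for every finite $F$, the sequence $S$ must meet infinitely many perturbed rows, so I can pass to a subsequence whose entries are $\pi(c_{m_{l}}\oplus a_{m_{l},n_{l}})$ with $m_{1}<m_{2}<\cdots$ and $n_{l}>m_{l}$.

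The hard part is the recovery step, which is exactly where neutrality and the strongly topological structure enter. Here $\pi(c_{m_{l}})\to\pi(0)$ and $\pi(c_{m_{l}}\oplus a_{m_{l},n_{l}})\to\pi(0)$, so Lemma \ref{4yl1} gives first $\pi(\ominus c_{m_{l}})\to\pi(0)$ and then $\pi\big(\ominus c_{m_{l}}\oplus(c_{m_{l}}\oplus a_{m_{l},n_{l}})\big)\to\pi(0)$. By the gyroassociative law (G3) this last point equals $\pi\big(\mathrm{gyr}[\ominus c_{m_{l}},c_{m_{l}}](a_{m_{l},n_{l}})\big)$, and I would convert it back to $\pi(a_{m_{l},n_{l}})\to\pi(0)$ using that every $U\in\mathscr U$ satisfies $\mathrm{gyr}[u,v](U)=U$ and $\mathrm{gyr}[u,v](H)=H$: as each gyration is an $\oplus$-automorphism preserving $U\oplus H$, the membership $\mathrm{gyr}[\ominus c_{m_{l}},c_{m_{l}}](a_{m_{l},n_{l}})\in U\oplus H$ is equivalent to $a_{m_{l},n_{l}}\in U\oplus H$, and running over $\mathscr U$ yields $\pi(a_{m_{l},n_{l}})\to\pi(0)$. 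Finally, since $n_{l}>m_{l}\to\infty$ forces $n_{l}\to\infty$, I would thin once more so that $\{n_{l}\}$ is strictly increasing; the row indices, being a subsequence of the strictly increasing $\{m_{l}\}$, stay strictly increasing, and taking $i_{k}$ and $j_{k}$ to be these row and column indices gives the required strictly increasing sequences with $x_{i_{k},j_{k}}\to\pi(0)$. I expect the two delicate points to be the legitimacy of the perturbation (keeping $\pi(0)$ in $\overline{V}$ while expelling it from each row's closure) and the gyration-stripping in the recovery step.
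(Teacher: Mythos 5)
Your construction is essentially the paper's own: you perturb the $m$-th row by a point $c_{m}$ with $\pi(c_{m})\to\pi(0)$ and $\pi(c_{m})\neq\pi(0)$, shift the column index past $m$, apply the Fr\'echet--Urysohn property to the perturbed set, and then undo the perturbation with Lemma \ref{4yl1}; this is exactly the paper's proof, where the perturbed entries are $y_{m,l}=\pi(g_{a_{m}}\oplus g_{x_{m,l+m}})$. Your endgame is actually cleaner than the paper's: instead of the case analysis on whether the extracted column indices are bounded, you note that the closure of any finite union of (cleaned) perturbed rows omits $\pi(0)$, so the extracted sequence must meet infinitely many rows, and a final thinning makes both index sequences strictly increasing. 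Also, your gyration-stripping step, while valid, is unnecessary: the left cancellation law of gyrogroups gives $\ominus c\oplus(c\oplus a)=a$ exactly (indeed $\mathrm{gyr}[\ominus c,c]$ is the identity automorphism), which is what the paper uses silently.

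The one step that needs repair is the provenance of the sequence $\{c_{m}\}$. You claim you may assume each row consists of distinct points and then take the $c_{m}$ as lifts of first-row entries; neither part is justified. The hypothesis only says each row converges to $\pi(0)$, so a row can be eventually (or identically) equal to $\pi(0)$; in particular the first row need not contain infinitely many points distinct from $\pi(0)$, and your ``discard the trivial situation'' reduction does not cover, for instance, an array whose first row is constantly $\pi(0)$ while the space $G/H$ is nondiscrete. The fix is the paper's dichotomy, which draws the sequence from the space rather than from the array: if $G/H$ is discrete the conclusion is trivial; otherwise, by homogeneity (Lemma \ref{yl1}) the point $\pi(0)$ is not isolated, hence $\pi(0)\in\overline{G/H\setminus\{\pi(0)\}}$, and the Fr\'echet--Urysohn property yields a sequence of points different from $\pi(0)$ converging to $\pi(0)$, whose lifts serve as your $c_{m}$. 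With that substitution the rest of your argument goes through verbatim (the ``distinct points'' assumption is never actually used anywhere else).
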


\begin{proof}
Let $\{x_{m,n}:m,n\in \mathbb{N}\}\subset G/H$ be such that $lim_{n\rightarrow \infty}x_{m,n}=\pi (0)$ for each $m\in \mathbb{N}$. We show that there exist strictly increasing sequences of natural numbers $\{i_{k}\}_{k\in \mathbb{N}}$ and $\{j_{k}\}_{k\in \mathbb{N}}$ such that $lim_{k\rightarrow \infty}x_{i_{k},j_{k}}=\pi (0)$.

If $G/H$ is discrete, it is trivial. Then we assume that $G/H$ is non-discrete. We can find a sequence $\{a_{m}\}_{m\in \mathbb{N}}\subset G/H$ with $lim_{m\rightarrow \infty}a_{m}=\pi (0)$ and $a_{m}\not=\pi (0)$ for each $m\in \mathbb{N}$. For each $a\in G/H$, take a point $g_{a}\in G$ such that $\pi (g_{a})=a$. Put
$$y_{m,l}=\left\{
\begin{matrix}
\pi (g_{a_{m}}\oplus g_{x_{m},l+m})&, &\pi (g_{a_{m}}\oplus g_{x_{m},l+m})\not=\pi (0);\\
a_{m}&, &otherwise.
\end{matrix}
\right.$$

Set $M=\{y_{m,l}:(m,l)\in \mathbb{N}\times \mathbb{N}\}$. It is clear that $\pi (0)\not\in M$. We show that $\pi (0)\in \overline{M}$.

Indeed, for each open neighborhood $U$ of $\pi (0)$ in $G/H$, since $lim_{m\rightarrow \infty}a_{m}=\pi (0)$, there exists $m\in \mathbb{N}$ such that $a_{m}\in U$. By $lim_{n\rightarrow \infty}x_{m,n}=\pi (0)$, we have $lim_{l\rightarrow \infty}y_{m,l}=a_{m}$. Thus, we can find $l\in \mathbb{N}$ with $y_{m,l}\in U$. We obtain that $\pi (0)\in \overline{M}$. By the hypothesis, $G/H$ is Fr\'echet-Urysohn, so there exists a sequence $\{(m_{k},l_{k})\}_{k\in \mathbb{N}}$ with $lim_{k\rightarrow \infty}y_{m_{k},l_{k}}=\pi (0)$.

{\bf Case 1.} The sequence $\{l_{k}\}_{k\in \mathbb{N}}$ is bounded.

Choose a subsequence if necessary, then assume that $l_{k}=r, k=1,2,\cdot \cdot \cdot $, for some natural number $r$. Since $lim_{k\rightarrow \infty}y_{m_{k},r}=lim_{k\rightarrow \infty}y_{m_{k},l_{k}}=\pi (0)$ and $y_{m_{k},r}\not=\pi (0)$, we know that $lim_{k\rightarrow \infty}m_{k}=\infty$. Choosing once more a subsequence, we assume further that $m_{k}< m_{k+1}$ for each $k\in \mathbb{N}$.

{\bf Subcase 1.1.} The set $N_{1}=\{k\in\mathbb{N}:y_{m_{k},r}=a_{m_{k}}\}$ is infinite.

Denote $N_{1}=\{p_{1},p_{2},\cdot \cdot\cdot\}$ with $p_{k}< p_{k+1}$ for each $k\in \mathbb{N}$. Then $\pi (g_{a_{m_{p_{k}}}}\oplus g_{x_{m_{p_{k}}},x+m_{p_{k}}})=\pi (0)$ for each $k\in \mathbb{N}$. As $lim_{k\rightarrow\infty}\pi (g_{a_{m_{p_{k}}}})=lim_{k\rightarrow\infty}a_{m_{p_{k}}}=\pi (0)$, we have that $lim_{k\rightarrow\infty}\pi [\ominus g_{a_{m_{p_{k}}}}\oplus (g_{a_{m_{p_{k}}}}\oplus g_{x_{m_{p_{k}}},r+m_{p_{k}}})]=\pi (0)$ by Lemma \ref{4yl1}, that is, $lim_{k\rightarrow\infty}x_{m_{p_{k}},r+m_{p_{k}}}=\pi (0)$. For each $k\in \mathbb{N}$, put $i_{k}=m_{p_{k}}$ and $j_{k}=r+m_{p_{k}}$, then $\{i_{k}\}_{k\in \mathbb{N}}$ and $\{j_{k}\}_{k\in \mathbb{N}}$ are strictly increasing sequences such that $lim_{k\rightarrow\infty}x_{i_{k},j_{k}}=\pi (0)$.

{\bf Subcase 1.2.} The set $N_{1}=\{k\in\mathbb{N}:y_{m_{k},r}=a_{m_{k}}\}$ is finite.

Then put $N_{2}=\{k\in \mathbb{N}:y_{m_{k},r}\not=a_{m_{k}}\}$ is infinite. Denote $N_{2}=\{q_{1},q_{2},\cdot \cdot\cdot\}$ with $q_{k}< q_{k+1}$ for each $k\in \mathbb{N}$. Then $y_{m_{q_{k}},l_{q_{k}}}=\pi (g_{a_{m_{q_{k}}}}\oplus g_{x_{m_{q_{k}}},x+m_{q_{k}}})$ for each $k\in \mathbb{N}$. Since $lim_{k\rightarrow\infty}\pi (g_{a_{m_{q_{k}}}})=lim_{k\rightarrow\infty}a_{m_{q_{k}}}=\pi (0)$, we have that $lim_{k\rightarrow\infty}\pi [\ominus g_{a_{m_{q_{k}}}}\oplus (g_{a_{m_{q_{k}}}}\oplus g_{x_{m_{q_{k}}},r+m_{q_{k}}})]=\pi (0)$ by Lemma \ref{4yl1}, that is, $lim_{k\rightarrow\infty}x_{m_{q_{k}},r+m_{q_{k}}}=\pi (0)$. For each $k\in \mathbb{N}$, put $i_{k}=m_{q_{k}}$ and $j_{k}=r+m_{q_{k}}$, then $\{i_{k}\}_{k\in \mathbb{N}}$ and $\{j_{k}\}_{k\in \mathbb{N}}$ are also strictly increasing sequences such that $lim_{k\rightarrow\infty}x_{i_{k},j_{k}}=\pi (0)$.

{\bf Case 2.} The sequence $\{l_{k}\}_{k\in \mathbb{N}}$ is not bounded.

Assume that $\{l_{k}\}_{k\in \mathbb{N}}$ is strictly increasing, then $lim_{k\rightarrow \infty}m_{k}=\infty$. Otherwise, taking a subsequence if necessary, there exists $s\in \mathbb{N}$ such that $m_{k}=s$ for each $k\in \mathbb{N}$. Since $\{l_{k}\}_{k\in \mathbb{N}}$ is strictly increasing, $lim_{k\rightarrow\infty}x_{s,s+l_{k}}=\pi (0)$. By $lim_{k\rightarrow\infty}y_{s,l_{k}}=\pi (0)$, $a_{m_{k}}=a_{s}=\pi (0)$, which is a contradiction with the choice of $\{a_{m}\}_{m\in \mathbb{N}}$. Thus $lim_{k\rightarrow\infty}m_{k}=\infty$. Then there exists a strictly increasing sequence $\{n_{k}\}_{k\in \mathbb{N}}$ of $\mathbb{N}$ such that $m_{n_{1}}< m_{n_{2}}<\cdot\cdot\cdot$. As $$lim_{k\rightarrow\infty}\pi (g_{a_{m_{n_{k}}}}\oplus g_{x_{m_{n_{k}}},l_{n_{k}}+m_{n_{k}}})=lim_{k\rightarrow\infty}y_{m_{n_{k}},l_{n_{k}}}=\pi (0)$$ and $$lim_{k\rightarrow\infty}\pi (g_{a_{m_{n_{k}}}})=lim_{k\rightarrow\infty}a_{m_{n_{k}}}=\pi (0),$$ by Lemma \ref{4yl1}, we have that $$lim_{k\rightarrow\infty}\pi [\ominus g_{a_{m_{n_{k}}}}\oplus (g_{a_{m_{n_{k}}}}\oplus g_{x_{m_{n_{k}}},l_{n_{k}}+m_{n_{k}}})]=\pi (0),$$ that is, $lim_{k\rightarrow\infty}x_{m_{n_{k}},l_{n_{k}}+m_{n_{k}}}=\pi (0)$. For each $k\in \mathbb{N}$, put $i_{k}=m_{n_{k}}$ and $j_{k}=l_{n_{k}}+m_{n_{k}}$, so $\{i_{k}\}_{k\in \mathbb{N}}$ and $\{j_{k}\}_{k\in \mathbb{N}}$ are strictly increasing sequences such that $lim_{k\rightarrow\infty}x_{i_{k},j_{k}}=\pi (0)$.
\end{proof}

\begin{definition}\cite{BT,LPT,GK}
A point $x$ of a topological space $X$ is said to have a {\it neighborhood $\omega^{\omega}$-base} or a {\it local $\mathfrak{G}$-base} if there exists a base of neighborhoods at $x$ of the form $\{U_{\alpha}(x):\alpha \in \mathbb{N}^{\mathbb{N}}\}$ such that $U_{\beta}(x)\subset U_{\alpha}(x)$ for all elements $\alpha \leq \beta$ in $\mathbb{N}^{\mathbb{N}}$, where $\mathbb{N}^{\mathbb{N}}$ consisting of all functions from $\mathbb{N}$ to $\mathbb{N}$ is endowed with the natural partial order, ie., $f\leq g$ if and only if $f(n)\leq g(n)$ for all $n\in \mathbb{N}$. The space $X$ is said to have an {\it $\omega^{\omega}$-base} or a {\it $\mathfrak{G}$-base} if it has a neighborhood $\omega^{\omega}$-base or a local $\mathfrak{G}$-base at every point $x\in X$.
\end{definition}

Suppose that $(G,\tau ,\oplus)$ is a strongly topological gyrogroup with a symmetric neighborhood base $\mathscr U$ at $0$ and $H$ is an admissible subgyrogroup generated from $\mathscr U$. Suppose further that the quotient space $G/H$ has an $\omega^{\omega}$-base $\{U_{\alpha}:\alpha \in \mathbb{N}^{\mathbb{N}}\}$. Set $$I_{k}(\alpha)=\{\beta \in \mathbb{N}^{\mathbb{N}}:\beta _{i}=\alpha _{i} ~for~i=1,...,k\}, ~~and~~D_{k}(\alpha)=\bigcap_{\beta \in I_{k}(\alpha)}U_{\beta},$$ where $\alpha =(\alpha _{i})_{i\in \mathbb{N}}\in \mathbb{N}^{\mathbb{N}}$ and $k\in \mathbb{N}$. It is clear that $\{D_{k}(\alpha)\}_{k\in \mathbb{N}}$ is an increasing sequence of subsets of $G$ and contains $\pi (0)$.

\begin{lemma}\cite{GKL}\label{GKL1}
Let $\alpha =(\alpha _{i})_{i\in \mathbb{N}}\in \mathbb{N}^{\mathbb{N}}$ and $\beta _{k}=(\beta ^{k}_{i})_{i\in \mathbb{N}}\in I_{k}(\alpha)$ for every $k\in \mathbb{N}$. Then there is $\gamma \in \mathbb{N}^{\mathbb{N}}$ such that $\alpha \leq \gamma$ and $\beta _{k}\leq \gamma$ for every $k\in \mathbb{N}$.
\end{lemma}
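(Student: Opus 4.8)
The plan is to construct $\gamma$ coordinate by coordinate, and the one idea that makes the whole argument work is to read off exactly what the hypothesis $\beta_k \in I_k(\alpha)$ says. By the definition of $I_k(\alpha)$, this membership means precisely that $\beta^k_i = \alpha_i$ for every $i \in \{1, \dots, k\}$. Fixing a coordinate $i$ and letting $k$ range, this forces $\beta^k_i = \alpha_i$ for all $k \geq i$, so the only values $\beta^k_i$ that can exceed $\alpha_i$ occur for the finitely many indices $k \in \{1, \dots, i-1\}$. This finiteness is the crux of the matter, and it is exactly the point where the naive approach breaks down: one would like to set $\gamma_i = \sup_{k} \beta^k_i$, but that supremum need not be finite, in which case $\gamma$ would fail to belong to $\mathbb{N}^{\mathbb{N}}$. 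The hypothesis that each $\beta_k$ lies in $I_k(\alpha)$, rather than in some single fixed $I_m(\alpha)$, is precisely what rescues the construction.

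With this in hand I would define, for each $i \in \mathbb{N}$,
$$\gamma_i = \max\{\alpha_i, \beta^1_i, \beta^2_i, \dots, \beta^{i-1}_i\},$$
with the convention that $\gamma_1 = \alpha_1$ (the index set $\{1, \dots, i-1\}$ being empty when $i = 1$). Each $\gamma_i$ is the maximum of a finite set of natural numbers, hence a well-defined element of $\mathbb{N}$, so $\gamma = (\gamma_i)_{i \in \mathbb{N}}$ is a genuine element of $\mathbb{N}^{\mathbb{N}}$.

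It then remains only to check the two order relations, and I expect this to be entirely routine. For $\alpha \leq \gamma$, the term $\alpha_i$ appears in the maximum defining $\gamma_i$, so $\alpha_i \leq \gamma_i$ for every $i$. For $\beta_k \leq \gamma$ with $k$ fixed, I would split on the coordinate $i$: when $i > k$ the value $\beta^k_i$ is one of the terms of the maximum defining $\gamma_i$, hence $\beta^k_i \leq \gamma_i$; and when $i \leq k$ the membership $\beta_k \in I_k(\alpha)$ gives $\beta^k_i = \alpha_i \leq \gamma_i$. In both cases $\beta^k_i \leq \gamma_i$, so $\beta_k \leq \gamma$, as required. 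Since the only real content is the finiteness observation of the first paragraph, no genuine obstacle remains; the sole thing to watch is the bookkeeping in the definition of $\gamma_i$, especially the degenerate case $i = 1$.
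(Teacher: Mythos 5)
Your proof is correct: the key observation that $\beta_k \in I_k(\alpha)$ pins $\beta^k_i = \alpha_i$ for all $k \geq i$, leaving only finitely many values $\beta^1_i,\dots,\beta^{i-1}_i$ to dominate in coordinate $i$, is exactly what makes the coordinatewise maximum $\gamma_i = \max\{\alpha_i,\beta^1_i,\dots,\beta^{i-1}_i\}$ well defined, and your two-case verification of $\beta_k \leq \gamma$ is complete. The paper itself imports this lemma from the cited reference \cite{GKL} without reproducing a proof, and your argument is the standard (essentially the only) one for this fact, so nothing further is needed.
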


\begin{theorem}\label{fdl1}
Let $(G,\tau ,\oplus)$ be a strongly topological gyrogroup with a symmetric neighborhood base $\mathscr U$ at $0$ and $H$ an admissible subgyrogroup generated from $\mathscr U$. If $H$ is neutral and $G/H$ is Fr\'echet-Urysohn with an $\omega^{\omega}$-base $\{U_{\alpha}:\alpha \in \mathbb{N}^{\mathbb{N}}\}$, then $G/H$ is first-countable.
\end{theorem}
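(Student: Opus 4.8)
The plan is to establish first-countability at the single point $\pi(0)$ and then spread it to all of $G/H$ by homogeneity (Lemma \ref{yl1}). The starting observation I would exploit is that $I_k(\alpha)$, and hence $D_k(\alpha)$, depends only on the initial segment $(\alpha_1,\dots,\alpha_k)$; therefore the whole family $\{D_k(\alpha):k\in\mathbb{N},\ \alpha\in\mathbb{N}^{\mathbb{N}}\}$ is in fact countable. Let $\mathcal{B}$ denote the subfamily consisting of those $D_k(\alpha)$ that are neighborhoods of $\pi(0)$. Since $\gamma\in I_k(\gamma)$ we always have $D_k(\gamma)\subset U_\gamma$. Consequently, if for every $\gamma\in\mathbb{N}^{\mathbb{N}}$ some $D_k(\gamma)$ were a neighborhood of $\pi(0)$, then $\mathcal{B}$ would be a countable neighborhood base at $\pi(0)$: given a neighborhood $O$, pick $\gamma$ with $U_\gamma\subset O$ from the $\omega^{\omega}$-base, and the corresponding $D_k(\gamma)\in\mathcal{B}$ satisfies $D_k(\gamma)\subset U_\gamma\subset O$. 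Thus the task reduces to excluding the opposite situation.

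Arguing by contradiction, I would assume that $G/H$ is not first-countable at $\pi(0)$. Since $\mathcal{B}$ is countable it cannot be a base, so there is a neighborhood $O$ of $\pi(0)$ containing no member of $\mathcal{B}$; choosing $\gamma$ with $U_\gamma\subset O$, one sees that $D_k(\gamma)$ is not a neighborhood of $\pi(0)$ for any $k$. Fix this $\gamma$. For each $k$, the failure of $D_k(\gamma)$ to be a neighborhood means $\pi(0)\in\overline{(G/H)\setminus D_k(\gamma)}$, and since $(G/H)\setminus D_k(\gamma)=\bigcup_{\beta\in I_k(\gamma)}((G/H)\setminus U_\beta)$, the Fr\'echet-Urysohn property supplies a sequence $\{x_{k,n}\}_{n\in\mathbb{N}}$ converging to $\pi(0)$ with $x_{k,n}\notin D_k(\gamma)$ for all $n$. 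For each pair $(k,n)$ I would fix a witness $\beta_{k,n}\in I_k(\gamma)$ with $x_{k,n}\notin U_{\beta_{k,n}}$. This yields a double array $\{x_{k,n}:k,n\in\mathbb{N}\}$ with $\lim_{n\to\infty}x_{k,n}=\pi(0)$ for every fixed $k$.

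The final step is a diagonalization. By Theorem \ref{4dl1}, $G/H$ is a strong $\alpha_4$-space, so there are strictly increasing sequences $\{k_j\}_{j\in\mathbb{N}}$ and $\{n_j\}_{j\in\mathbb{N}}$ with $\lim_{j\to\infty}x_{k_j,n_j}=\pi(0)$. As $\{k_j\}$ is strictly increasing we have $k_j\geq j$, hence $\beta_{k_j,n_j}\in I_{k_j}(\gamma)\subset I_j(\gamma)$. Applying Lemma \ref{GKL1} with $\alpha=\gamma$ and the elements $\beta_{k_j,n_j}\in I_j(\gamma)$ produces a single $\delta\in\mathbb{N}^{\mathbb{N}}$ with $\gamma\leq\delta$ and $\beta_{k_j,n_j}\leq\delta$ for all $j$. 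Monotonicity of the $\omega^{\omega}$-base then gives $U_\delta\subset U_{\beta_{k_j,n_j}}$, so that $x_{k_j,n_j}\notin U_\delta$ for every $j$; but $U_\delta$ is a neighborhood of $\pi(0)$ while $x_{k_j,n_j}\to\pi(0)$, a contradiction. Hence $G/H$ is first-countable at $\pi(0)$, and homogeneity completes the proof.

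I expect the main obstacle to be the combinatorial bookkeeping in this last step, namely arranging the indices so that the strong $\alpha_4$-extraction and the domination Lemma \ref{GKL1} interlock. The delicate point is that the $\alpha_4$-property only returns a diagonal along strictly increasing $\{k_j\}$, and one must observe that $k_j\geq j$ precisely guarantees $\beta_{k_j,n_j}\in I_j(\gamma)$, which is exactly the hypothesis that lets Lemma \ref{GKL1} collapse the countably many witnesses into one $\delta$ whose neighborhood $U_\delta$ is avoided by the whole diagonal.
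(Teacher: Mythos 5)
Your proposal is correct and takes essentially the same route as the paper's proof: the same sets $D_k(\alpha)$, the same use of the Fr\'echet-Urysohn property together with the strong $\alpha_4$ property (Theorem \ref{4dl1}) and Lemma \ref{GKL1} to derive the contradiction, the same countability observation about $\{D_k(\alpha)\}$, and homogeneity (Lemma \ref{yl1}) to finish. If anything, your explicit remark that $k_j\geq j$ guarantees $\beta_{k_j,n_j}\in I_{k_j}(\gamma)\subset I_j(\gamma)$---exactly the hypothesis needed to invoke Lemma \ref{GKL1}---makes precise a step the paper's own proof leaves implicit.
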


\begin{proof}
First, we establish the following:

{\bf Claim.} There is $k\in \mathbb{N}$ such that $D_{k}(\alpha)$ is a neighborhood of $\pi (0)$ for each $\alpha \in \mathbb{N}^{\mathbb{N}}$.

Suppose not, that is, we can find $\alpha \in \mathbb{N}^{\mathbb{N}}$ such that $D_{k}(\alpha)$ is not a neighborhood of $\pi (0)$, for any $k\in \mathbb{N}$. It means that $\pi (0)\in \overline{G/H\setminus D_{k}(\alpha)}$ for any $k\in \mathbb{N}$. By the hypothesis, $G/H$ is Fr\'echet-Urysohn. Therefore, there exists a sequence $\{x_{n,k}\}_{n\in \mathbb{N}}$ in $G/H\setminus D_{k}(\alpha)$ which converges to $\pi (0)$. It follows from Theorem \ref{4dl1} that $G/H$ is a strong $\alpha _{4}$-space, so there are strictly increasing sequences $(n_{i})_{i\in \mathbb{N}}$ and $(k_{i})_{i\in \mathbb{N}}$ of natural numbers such that $lim_{i\rightarrow \infty}x_{n_{i},k_{i}}=\pi (0)$. For every $i\in \mathbb{N}$, there exists $\beta _{k_{i}}\in I_{k_{i}}(\alpha)$ such that $x_{n_{i},k_{i}}\not \in U_{\beta _{k_{i}}}$. It follows from Lemma \ref{GKL1} that there is $\gamma \in \mathbb{N}^{\mathbb{N}}$ such that $\beta _{k_{i}}\leq \gamma$ for every $i\in \mathbb{N}$. Therefore, for any $i\in \mathbb{N}$, $x_{n_{i},k_{i}}\not \in U_{\gamma}$. We conclude that the sequence $\{x_{n_{i},k_{i}}\}_{i\in \mathbb{N}}$ does not converge to $0$ and this is a contradiction.

Therefore, we can find a minimal natural number $k_{\alpha}$ such that $D_{k_{\alpha}}(\alpha)$ is a neighborhood of $\pi (0)$ for each $\alpha \in \mathbb{N}^{\mathbb{N}}$. It is clear that $D_{k_{\alpha}}(\alpha)\subset U_{\alpha}$. Moreover, for $i\in \mathbb{N}$, fix $\alpha ^{(i)}=(i,\alpha_{2},\alpha_{3},...)\in \mathbb{N}^{\mathbb{N}}$. Then for any $\beta =(\beta_{1},\beta_{2},...)\in I_{1}(\alpha ^{(i)})$, $D_{1}(\beta)=D_{1}(\alpha ^{i})$. Therefore, $\{D_{1}(\alpha):\alpha \in \mathbb{N}^{\mathbb{N}}\}=\{D_{1}(\alpha^{(i)}):i\in \mathbb{N}\}$ is countable. So, $\{D_{k}(\alpha):k\in \mathbb{N},\alpha \in \mathbb{N}^{\mathbb{N}}\}$ is countable. Furthermore, $\{D_{k_{\alpha}}(\alpha):\alpha \in \mathbb{N}^{\mathbb{N}}\}\subset \{D_{k}(\alpha):k\in \mathbb{N},\alpha \in \mathbb{N}^{\mathbb{N}}\}$. Therefore, the countability of the family $\{D_{k_{\alpha}}(\alpha):\alpha \in \mathbb{N}^{\mathbb{N}}\}$ is obtained. In conclusion, the family $\{int(D_{k_{\alpha}}(\alpha)):\alpha \in \mathbb{N}^{\mathbb{N}}\}$ is a countable base of open neighborhoods at $\pi (0)$. It follows from the homogeneity of $G/H$ of Lemma \ref{yl1} that $G$ is first-countable.
\end{proof}

By Lemma \ref{yl1}, we know that the quotient space $G/H$ is homogenous, so it is clear that if $G/H$ is first-countable, then it has an $\omega^{\omega}$-base. Therefore, by Theorem \ref{h} and Theorem \ref{fdl1}, we have the following corollary.

\begin{corollary}\label{3tl1}
Suppose that $(G,\tau ,\oplus)$ is a strongly topological gyrogroup with a symmetric neighborhood base $\mathscr U$ at $0$ and $H$ is an admissible subgyrogroup generated from $\mathscr U$. Suppose further that $H$ is neutral, then $G/H$ is metrizable if and only if $G/H$ is Fr\'echet-Urysohn with an $\omega^{\omega}$-base.
\end{corollary}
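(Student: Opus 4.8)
The plan is to assemble the two directions of the equivalence from results already established, leaning on the homogeneity of $G/H$ (Lemma \ref{yl1}) throughout. This is a corollary, so the work is essentially bookkeeping: matching hypotheses to Theorem \ref{h} and Theorem \ref{fdl1}.

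For the forward implication, suppose $G/H$ is metrizable. I would first recall two standard facts. Every metrizable space is first-countable, and every metrizable space is Fr\'echet--Urysohn, since in a metric space the closure of any set coincides with its sequential closure; thus $G/H$ is Fr\'echet--Urysohn. It then remains only to produce an $\omega^{\omega}$-base, and this is exactly the observation recorded just before the statement: a first-countable homogeneous space has an $\omega^{\omega}$-base. Concretely, given a decreasing countable neighborhood base $\{W_{n}\}_{n\in \mathbb N}$ at $\pi(0)$, one sets $U_{\alpha}=W_{\alpha(1)}$ for $\alpha=(\alpha_{i})_{i\in \mathbb N}\in \mathbb N^{\mathbb N}$, which is monotone under the partial order on $\mathbb N^{\mathbb N}$ and cofinal in the base, and then transports the family to every point of $G/H$ via homogeneity. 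Hence $G/H$ is Fr\'echet--Urysohn with an $\omega^{\omega}$-base.

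For the reverse implication, suppose $H$ is neutral and $G/H$ is Fr\'echet--Urysohn with an $\omega^{\omega}$-base. These are precisely the hypotheses of Theorem \ref{fdl1}, so I would apply it directly to conclude that $G/H$ is first-countable. Theorem \ref{h} (equivalently, the corollary following it) then yields that $G/H$ is metrizable, completing the equivalence.

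Since the whole argument is a routine combination of the earlier results, there is no genuine obstacle here. The only point requiring a little care is the forward direction, where one must supply the two elementary facts (metrizable $\Rightarrow$ Fr\'echet--Urysohn, and first-countable $\Rightarrow$ $\omega^{\omega}$-base) that are not restated as numbered lemmas; neutrality of $H$ is not even needed in that direction. All the substantive effort sits upstream, in Theorem \ref{4dl1} (the strong $\alpha_{4}$ property) and Theorem \ref{fdl1}, on which the reverse direction rests.
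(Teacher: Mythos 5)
Your proposal is correct and follows essentially the same route as the paper: the reverse direction is Theorem \ref{fdl1} followed by Theorem \ref{h}, and the forward direction is the observation (made in the paper just before the corollary, via Lemma \ref{yl1}) that a metrizable, hence first-countable, homogeneous space is Fr\'echet--Urysohn and has an $\omega^{\omega}$-base. Your explicit construction $U_{\alpha}=W_{\alpha(1)}$ and the remark that neutrality is not needed in the forward direction are correct details the paper leaves implicit.
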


\section{cardinality of quotient spaces}

We denote by $\chi (X)$, $\pi \chi (X)$, $\omega (X)$ and $\pi \omega (X)$ the character, $\pi$-character, weight and $\pi$-weight of a space $X$, respectively. Similarly, $\chi (x,X)$ and $\pi \chi (x,X)$ stand for the character and $\pi$-character of $X$ at a point $x\in X$. In this section, we show that if $(G,\tau ,\oplus)$ is a strongly topological gyrogroup with a symmetric neighborhood base $\mathscr U$ at $0$, $H$ is an admissible subgyrogroup generated from $\mathscr U$ and $H$ is neutral, then $\pi\chi(G/H)=\chi(G/H)$ and $\pi\omega (G/H)=\omega (G/H)$.

\begin{theorem}\label{3yl1}
Let $(G,\tau ,\oplus)$ be a strongly topological gyrogroup with a symmetric neighborhood base $\mathscr U$ at $0$ and $H$ an admissible subgyrogroup generated from $\mathscr U$. If $H$ is neutral, then $\pi\chi(G/H)=\chi(G/H)$.
\end{theorem}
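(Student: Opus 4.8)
The plan is to prove the two inequalities $\pi\chi(G/H)\le \chi(G/H)$ and $\chi(G/H)\le \pi\chi(G/H)$ separately. The first is automatic, since every base of open neighbourhoods at a point is in particular a local $\pi$-base, so only the reverse inequality requires work. Because $G/H$ is homogeneous by Lemma \ref{yl1}, both the character and the $\pi$-character are attained at every point, and it suffices to prove $\chi(\pi(0),G/H)\le \pi\chi(\pi(0),G/H)$. So I would fix a local $\pi$-base $\{P_{\alpha}:\alpha<\lambda\}$ at $\pi(0)$ consisting of non-empty open subsets of $G/H$, where $\lambda=\pi\chi(\pi(0),G/H)$, and manufacture from it a base of open neighbourhoods of $\pi(0)$ of the same cardinality.

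For the construction I would use the translations coming from the homogeneity of $G/H$. For $a\in G$ the left translation $L_{a}\colon x\mapsto a\oplus x$ of $G$ induces a homeomorphism $\overline{L}_{a}$ of $G/H$ given by $\overline{L}_{a}(x\oplus H)=(a\oplus x)\oplus H$; this is well defined because $\mbox{gyr}[a,x](H)=H$, and it is exactly the map witnessing homogeneity in Lemma \ref{yl1}. For each $\alpha<\lambda$ I would pick a point $y_{\alpha}\in P_{\alpha}$, choose $g_{\alpha}\in G$ with $\pi(g_{\alpha})=y_{\alpha}$, and set $Q_{\alpha}=\overline{L}_{\ominus g_{\alpha}}(P_{\alpha})$. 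Since $\overline{L}_{\ominus g_{\alpha}}(y_{\alpha})=\pi(\ominus g_{\alpha}\oplus g_{\alpha})=\pi(0)$ and $\overline{L}_{\ominus g_{\alpha}}$ is a homeomorphism, each $Q_{\alpha}$ is an open neighbourhood of $\pi(0)$, and the family $\{Q_{\alpha}:\alpha<\lambda\}$ has cardinality at most $\lambda$.

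It then remains to check that $\{Q_{\alpha}:\alpha<\lambda\}$ is a neighbourhood base at $\pi(0)$, and this is the step I expect to be the main obstacle. Given an open neighbourhood $O$ of $\pi(0)$, I would first produce, by the same computation used in the proof of Lemma \ref{4yl1}, a member $W\in \mathscr U$ such that $\pi\big(\ominus(W\oplus H)\oplus(W\oplus H)\big)\subset O$: one fixes $U\in\mathscr U$ with $\pi(U\oplus U)\subset O$, uses the neutrality of $H$ to absorb the copies of $H$ (via $H\oplus V\subset U\oplus H$), and uses the gyration invariance $\mbox{gyr}[x,y](H)=H$ and $\mbox{gyr}[x,y](W)=W$ together with the gyroassociative identity (G3) exactly as in the displayed manipulation of Lemma \ref{4yl1}. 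Since $\{P_{\alpha}\}$ is a $\pi$-base, there is an $\alpha$ with $P_{\alpha}\subset \pi(W)$; then $g_{\alpha}\in W\oplus H$, and every $z\in P_{\alpha}$ lifts to some $g_{z}\in W\oplus H$, so $\ominus g_{\alpha}\oplus g_{z}\in \ominus(W\oplus H)\oplus(W\oplus H)$ and hence $\overline{L}_{\ominus g_{\alpha}}(z)=\pi(\ominus g_{\alpha}\oplus g_{z})\in O$. Thus $Q_{\alpha}\subset O$, which shows that $\{Q_{\alpha}\}$ is a base at $\pi(0)$ and gives $\chi(\pi(0),G/H)\le\lambda=\pi\chi(\pi(0),G/H)$. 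Combined with the trivial inequality this yields $\pi\chi(G/H)=\chi(G/H)$. The delicate point throughout is that the estimate $\pi(\ominus(W\oplus H)\oplus(W\oplus H))\subset O$ must hold uniformly, i.e. for all lifts in $W\oplus H$ simultaneously rather than along a single sequence; this is really the neighbourhood (non-sequential) analogue of Lemma \ref{4yl1}, and it is where neutrality of $H$ is essential.
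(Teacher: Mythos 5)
Your proposal is correct, and it shares the paper's skeleton --- reduce to the point $\pi(0)$ by homogeneity (Lemma \ref{yl1}), fix a local $\pi$-base of cardinality $\pi\chi(\pi(0),G/H)$ there, and convert it into a local base by lifting to $G$ and exploiting neutrality --- but the conversion itself is carried out differently. The paper never translates: for each $V$ in the $\pi$-base it sets $W_V=\pi^{-1}(V)$ and proves that the difference sets $\pi\bigl(W_V\oplus(\ominus W_V)\bigr)$ form a local base at $\pi(0)$, via the displayed inclusion $(W\oplus H)\oplus \ominus(W\oplus H)\subset (U\oplus U)\oplus H$. You instead pick a point $y_\alpha$ in each $\pi$-base element, lift it to $g_\alpha\in G$, and translate by $\ominus g_\alpha$ through the induced homeomorphism $\overline{L}_{\ominus g_\alpha}$ of $G/H$; the inclusion you need, $\pi\bigl(\ominus(W\oplus H)\oplus(W\oplus H)\bigr)\subset O$, is the left-handed twin of the paper's computation, and it does go through with exactly the ingredients you list: $\ominus(W\oplus H)\subset H\oplus W\subset U\oplus H$ by gyration invariance, symmetry and neutrality, then $(U\oplus H)\oplus(W\oplus H)\subset \bigl((U\oplus U)\oplus H\bigr)\oplus H$ by the two gyroassociative laws together with invariance of $H$ and of members of $\mathscr U$ under gyrations, and finally $\pi\bigl(((U\oplus U)\oplus H)\oplus H\bigr)=\pi(U\oplus U)\subset O$. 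The trade-off between the two routes is this: the paper's difference-set construction stays entirely at the level of preimages in $G$, so it needs no choice of points or lifts and no verification that coset translations are well-defined homeomorphisms of $G/H$ --- openness of $\pi$ alone makes its basic sets open neighborhoods of $\pi(0)$; your construction does need that verification (which is legitimate here because $\mbox{gyr}[x,y](H)=H$ holds for admissible subgyrogroups generated from $\mathscr U$, a fact that need not follow from the $L$-subgyrogroup property alone), but in exchange it produces smaller basic neighborhoods, namely translates of the original $\pi$-base elements, and it isolates cleanly the uniform, non-sequential analogue of Lemma \ref{4yl1} on which both proofs really rest.
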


\begin{proof}
Let $\pi$ be the natural homomorphism from $G$ to $G/H$ and $e^{*}=\pi(0)$ in $G/H$. It follows from Lemma \ref{yl1} that the quotient space $G/H$ is homogenous, so we have $\chi(G/H)=\chi(e^{*},G/H)$ and $\pi\chi(G/H)=\pi\chi(e^{*},G/H)$. Then it suffices to show $$\chi(e^{*},G/H)\leq \pi \chi(e^{*},G/H).$$

Let $\mathcal{B}$ be a local $\pi$-base for $G/H$ at $e^{*}$ such that $|\mathcal{B}|=\pi\chi(e^{*},G/H)$. For every $V\in \mathcal{B}$, put $W_{V}=\pi^{-1}(V)$. We show that the family $\mathcal{C}=\pi (W_{V}\oplus (\ominus W_{V}))$ is a local base for $G/H$ at $e^{*}$. Assume that $O$ is an open neighborhood of $e^{*}$ in $G/H$. Since $H$ is an admissible subgyrogroup generated from $\mathscr U$, there exists a sequence $\{U_{n}:n\in \mathbb{N}\}$ of symmetric open neighborhoods of the identity $0$ in $G$ satisfying $U_{n}\in \mathscr U$ and $U_{n+1}\oplus (U_{n+1}\oplus U_{n+1})\subset U_{n}$, for each $n\in \mathbb{N}$, and such that $H=\bigcap _{n\in \mathbb{N}}U_{n}$. Therefore, we have $gyr[x,y](H)=H$ for any $x,y\in G$. Since $H$ is neutral, we choose $U,W\in \mathscr{U}$ such that $\pi(U\oplus U)\subset O$, $W\subset U$, and $H\oplus W\subset U\oplus H$. As $\mathcal{B}$ is a local $\pi$-base at $e^{*}$ in $G/H$, there exists $V\in \mathcal{B}$ such that $V\subset \pi(W)$. Then $W_{V}=\pi^{-1}(V)\subset \pi^{-1}\pi(W)=W\oplus H$.

\begin{eqnarray}
W_{V}\oplus (\ominus W_{V})&\subset&(W\oplus H)\oplus \ominus (W\oplus H)\nonumber\\
&=&(W\oplus H)\oplus gyr[W,H](\ominus H\ominus W)\nonumber\\
&=&(W\oplus H)\oplus gyr[W,H](H\oplus W)\nonumber\\
&=&(W\oplus H)\oplus (H\oplus W)\nonumber\\
&=&W\oplus (H\oplus gyr[H,W](H\oplus W))\nonumber\\
&=&W\oplus (H\oplus (H\oplus W))\nonumber\\
&=&W\oplus ((H\oplus H)\oplus gyr[H,H](W))\nonumber\\
&=&W\oplus (H\oplus W)\nonumber\\
&\subset& W\oplus (U\oplus H)\nonumber\\
&\subset& U\oplus (U\oplus H)\nonumber\\
&=&(U\oplus U)\oplus gyr[U,U](H)\nonumber\\
&=&(U\oplus U)\oplus H.\nonumber
\end{eqnarray}

Therefore, $\pi (W_{V}\oplus (\ominus W_{V}))\subset \pi((U\oplus U)\oplus H)=\pi(U\oplus U)\subset O$. It is shown that the family $\mathcal{C}=\pi (W_{V}\oplus (\ominus W_{V}))$ is a local base for $G/H$ at $e^{*}$ and it is clear that $|\mathcal{C}|\leq |\mathcal{B}|$. Then $\chi(e^{*},G/H)\leq \pi \chi(e^{*},G/H)$.
\end{proof}

\begin{proposition}\label{3mt1}
Let $(G,\tau ,\oplus)$ be a strongly topological gyrogroup with a symmetric neighborhood base $\mathscr U$ at $0$ and $H$ an admissible subgyrogroup generated from $\mathscr U$. If $H$ is neutral, $B$ is a dense subset of $G/H$, and $A\subset G$ such that $\pi(A)=B$, then $\pi(A\oplus U)=G/H$, for every neighborhood $U$ of the identity element in $G$.
\end{proposition}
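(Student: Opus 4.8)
The plan is to prove the set equality by showing $\pi(x)\in\pi(A\oplus U)$ for an arbitrary $x\in G$; the reverse containment $\pi(A\oplus U)\subseteq G/H$ is trivial. First I would shrink $U$ to a more convenient neighborhood. Using the neutrality of $H$ together with the fact that $\mathscr U$ is symmetric, I would pick an open $W\in\mathscr U$ with $W\subset U$, $W=\ominus W$, and $H\oplus W\subset U\oplus H$. Two invariance facts will be used repeatedly: every member of $\mathscr U$ is invariant under all gyrations by Definition \ref{d11}, so $\mbox{gyr}[x,y](W)=W$ for all $x,y\in G$; and, as in the proof of Lemma \ref{4yl1}, $\mbox{gyr}[x,y](H)=H$ for all $x,y\in G$. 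I will also use the gyration inversion law $\mbox{gyr}[p,q]^{-1}=\mbox{gyr}[q,p]$ and the standard identity $\ominus(p\oplus q)=\mbox{gyr}[p,q](\ominus q\ominus p)$ (see \cite{UA2005}).

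Next, fixing $x\in G$, I would produce an approximating point of $A$. Since left translations are homeomorphisms and $\pi$ is open and continuous by \cite[Theorem 3.7]{BL}, the set $\pi(x\oplus W)$ is an open neighborhood of $\pi(x)$. As $B=\pi(A)$ is dense in $G/H$, there is $a\in A$ with $\pi(a)\in\pi(x\oplus W)$, that is, $a\in(x\oplus w)\oplus H=x\oplus(w\oplus H)$ for some $w\in W$, where the last equality uses $\mbox{gyr}[x,w](H)=H$. The left cancellation law then gives $\ominus x\oplus a=w\oplus h_{0}\in W\oplus H$ for some $h_{0}\in H$.

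The crux, and the main technical obstacle, is to pass from $\ominus x\oplus a\in W\oplus H$ to $\ominus a\oplus x\in U\oplus H$, since $G/H$ is only a homogeneous coset space and the two expressions differ by a gyration. Applying the inversion identity with $p=\ominus x$ and $q=a$ yields $\ominus a\oplus x=\mbox{gyr}[a,\ominus x](\ominus(\ominus x\oplus a))=\mbox{gyr}[a,\ominus x](\ominus(w\oplus h_{0}))$. Expanding $\ominus(w\oplus h_{0})=\mbox{gyr}[w,h_{0}](\ominus h_{0}\oplus\ominus w)$ and using $\ominus h_{0}\in H$, $\ominus w\in W$ together with the invariance of both $H$ and $W$ under every gyration, I obtain $\ominus(w\oplus h_{0})\in H\oplus W\subset U\oplus H$. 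Applying the outer gyration $\mbox{gyr}[a,\ominus x]$, which again fixes $U$ and $H$ setwise, gives $\ominus a\oplus x\in U\oplus H$. This single step is precisely where the strong topological gyrogroup hypothesis, the $L$-subgyrogroup invariance of $H$, and the neutrality of $H$ are all simultaneously needed.

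Finally, I would write $\ominus a\oplus x=u\oplus h_{1}$ with $u\in U$ and $h_{1}\in H$. The left cancellation law then gives $x=a\oplus(\ominus a\oplus x)=a\oplus(u\oplus h_{1})=(a\oplus u)\oplus\mbox{gyr}[a,u](h_{1})$, and $\mbox{gyr}[a,u](h_{1})\in H$. Hence $x\in(a\oplus u)\oplus H$, so $\pi(x)=\pi(a\oplus u)\in\pi(A\oplus U)$. Since $x\in G$ was arbitrary, $\pi(A\oplus U)=G/H$, completing the proof.
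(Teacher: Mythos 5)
Your overall strategy is the same as the paper's: use density of $B$ to produce $a\in A$ with $\pi(a)\in\pi(x\oplus W)$ for a suitably small $W\in\mathscr U$, then use the gyrogroup identities, the gyration-invariance of $H$ and of members of $\mathscr U$, and the neutrality of $H$ to convert this into $x\in(a\oplus u)\oplus H$ for some $u\in U$, i.e. $\pi(x)\in\pi(A\oplus U)$. The paper solves for $x$ directly, writing $x=(a\oplus \mbox{gyr}[x\oplus v,h](\ominus h))\oplus \mbox{gyr}[x,v](\ominus v)\in(a\oplus H)\oplus V=a\oplus(H\oplus V)\subset a\oplus(U\oplus H)=(a\oplus U)\oplus H$, whereas you pass through $\ominus x\oplus a$ and the inversion identity to reach $\ominus a\oplus x\in U\oplus H$ and then left-cancel; these are the same computation in different packaging, so the approaches are not genuinely different.

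However, one justification in your crux step is wrong as stated: you first record $\ominus(w\oplus h_{0})\in H\oplus W\subset U\oplus H$ and then apply $\mbox{gyr}[a,\ominus x]$, asserting that this gyration ``fixes $U$ and $H$ setwise.'' The proposition allows $U$ to be an \emph{arbitrary} neighborhood of $0$; gyration-invariance is guaranteed only for members of $\mathscr U$ (Definition \ref{d11}) and for $H$ (as in Lemma \ref{4yl1}). In general $\mbox{gyr}[a,\ominus x](U)\neq U$: in the M\"obius gyrogroup the gyrations are rotations of the disk, so any neighborhood of $0$ that is not rotation-invariant is moved by them. The repair is immediate and uses only what you already set up: apply the gyration \emph{before} invoking neutrality. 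Since gyrations are automorphisms, $\mbox{gyr}[a,\ominus x](H\oplus W)=\mbox{gyr}[a,\ominus x](H)\oplus \mbox{gyr}[a,\ominus x](W)=H\oplus W$, because both $H$ and $W\in\mathscr U$ are invariant under every gyration; only then use $H\oplus W\subset U\oplus H$ to conclude $\ominus a\oplus x\in U\oplus H$. Alternatively, observe at the outset that it suffices to prove the claim for $U\in\mathscr U$, since $\pi(A\oplus U')\subset\pi(A\oplus U)$ whenever $U'\subset U$; after that reduction your invariance claim becomes legitimate. With either correction the argument is complete and coincides in substance with the paper's proof.
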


\begin{proof}
Since $H$ is an admissible subgyrogroup generated from $\mathscr U$, there exists a sequence $\{U_{n}:n\in \mathbb{N}\}$ of symmetric open neighborhoods of the identity $0$ in $G$ satisfying $U_{n}\in \mathscr U$ and $U_{n+1}\oplus (U_{n+1}\oplus U_{n+1})\subset U_{n}$, for each $n\in \mathbb{N}$, and such that $H=\bigcap _{n\in \mathbb{N}}U_{n}$. Therefore, we have $gyr[x,y](H)=H$ for any $x,y\in G$. For arbitrary open neighborhood $U$ of $0$ in $G$, there exists $V\in \mathscr{U}$ such that $V\oplus V\subset U$. Since $H$ is neutral, there exists $V\in \mathscr{U}$ such that $H\oplus V\subset U\oplus H$. Assume that $B$ is a dense subset of $G/H$, and $A\subset G$ such that $\pi(A)=B$. If $y\in G/H$, there exists $x\in G$ such that $\pi(x)=y$. Since $B$ is dense in $G/H$ and $\pi (x\oplus V)$ is open in $G/H$, it is clear that $B\cap \pi(x\oplus V)\not=\emptyset$, that is, there exists $b\in B$ and $b\in \pi(x\oplus V)$. Then we can find $a\in G$ satisfying $\pi(a)=b$. Then $\pi (a)\in \pi(x\oplus V)$, that is, $a\in (x\oplus V)\oplus H$. Therefore, there exist $v\in V$ and $h\in H$ such that $a=(x\oplus v)\oplus h$. Since

\begin{eqnarray}
(x\oplus v)&=&((x\oplus v)\oplus h)\oplus gyr[x\oplus v,h](\ominus h)\nonumber\\
&=&a\oplus gyr[x\oplus v,h](\ominus h),\nonumber
\end{eqnarray}

we have that

\begin{eqnarray}
x&=& (x\oplus v)\oplus gyr[x,v](\ominus v)\nonumber\\
&=&(a\oplus gyr[x\oplus v,h](\ominus h))\oplus gyr[x,v](\ominus v)\nonumber\\
&\in&(a\oplus gyr[x\oplus v,h](H))\oplus gyr[x,v](V)\nonumber\\
&=&(a\oplus H)\oplus V\nonumber\\
&=&a\oplus (H\oplus gyr[H,a](V))\nonumber\\
&=&a\oplus (H\oplus V)\nonumber\\
&\subset&a\oplus (U\oplus H)\nonumber\\
&\subset&(a\oplus U)\oplus gyr[a,U](H)\nonumber\\
&=&(a\oplus U)\oplus H.\nonumber
\end{eqnarray}

Therefore, $y=\pi(x)\in \pi((a\oplus U)\oplus H)=\pi(a\oplus H)\subset \pi(A\oplus U)$. We conclude that $\pi(A\oplus H)=G/H$.
\end{proof}

\begin{theorem}
Let $(G,\tau ,\oplus)$ be a strongly topological gyrogroup with a symmetric neighborhood base $\mathscr U$ at $0$ and $H$ an admissible subgyrogroup generated from $\mathscr U$. If $H$ is neutral, then $\pi\omega (G/H)=\omega (G/H)$.
\end{theorem}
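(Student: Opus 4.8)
The plan is to prove the inequality $\omega(G/H)\le\pi\omega(G/H)$, since the reverse inequality $\pi\omega(G/H)\le\omega(G/H)$ is trivial (every base is a $\pi$-base). Following the convention that these cardinal invariants are infinite, products of cardinals collapse to maxima, so it suffices to establish the single lemma
\[
\omega(G/H)\le d(G/H)\cdot\chi(G/H),
\]
where $d$ denotes density. Granting this lemma the argument closes quickly: for any space one has $d(X)\le\pi\omega(X)$ (choose one point from each member of a minimal $\pi$-base to obtain a dense set) and $\pi\chi(X)\le\pi\omega(X)$ (a $\pi$-base is automatically a local $\pi$-base at each point); combining these with Theorem \ref{3yl1}, which gives $\chi(G/H)=\pi\chi(G/H)$, we obtain $\omega(G/H)\le d(G/H)\cdot\chi(G/H)=\max\{d(G/H),\pi\chi(G/H)\}\le\pi\omega(G/H)$, whence equality. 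Throughout I will use the homogeneity of $G/H$ from Lemma \ref{yl1}, so that $\chi(G/H)=\chi(\pi(0),G/H)$, and the fact that left translations $L_a\colon\pi(x)\mapsto\pi(a\oplus x)$ are well-defined homeomorphisms of $G/H$, which holds because $\mbox{gyr}[a,x](H)=H$ for all $a,x\in G$.

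To prove the lemma I would construct an explicit base of the required size. Fix a dense set $B\subseteq G/H$ with $|B|=d(G/H)$ and a set $A\subseteq G$ with $\pi(A)=B$ and $|A|=|B|$. Since $\pi$ is open and continuous, $\{\pi(V):V\in\mathscr U\}$ is a local base at $\pi(0)$, so we may fix a symmetric subfamily $\{V_s:s\in S\}\subseteq\mathscr U$ with $|S|=\chi(G/H)$ for which $\{\pi(V_s):s\in S\}$ is a local base at $\pi(0)$. Set
\[
\mathcal B=\{\pi(a\oplus V_s):a\in A,\ s\in S\}.
\]
Each member is open (left translation is a homeomorphism of $G$ and $\pi$ is open) and $|\mathcal B|\le d(G/H)\cdot\chi(G/H)$. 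Given $y\in G/H$, an open set $P\ni y$, and a point $x\in\pi^{-1}(y)$, I must produce $a\in A$ and $s\in S$ with $y\in\pi(a\oplus V_s)\subseteq P$. The target splits into two conditions: (I) $a\oplus V_s\subseteq\pi^{-1}(P)$, which is equivalent to $\pi(a\oplus V_s)\subseteq P$ because $\pi^{-1}(P)$ is saturated; and (II) $\pi(a\oplus v)=y$ for some $v\in V_s$, which gives $y\in\pi(a\oplus V_s)$. Condition (II) is exactly what the covering Proposition \ref{3mt1} delivers: applied with the neighborhood $V_s$, it yields $\pi(A\oplus V_s)=G/H\ni y$, hence $a\in A$ and $v\in V_s$ with $\pi(a\oplus v)=y$.

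The main obstacle is arranging condition (I) simultaneously, i.e. forcing the chosen $a$ to be close enough to $x$ that $a\oplus V_s\subseteq\pi^{-1}(P)$; this is where neutrality of $H$ is essential. The relation $\pi(a\oplus v)=\pi(x)$ means $a\oplus v=x\oplus h_0$ for some $h_0\in H$, and using symmetry of $V_s$ together with the left gyroassociative law and $\mbox{gyr}$-invariance of $H$ one writes $a\in(x\oplus H)\oplus V_s$, so that $a\oplus V_s\subseteq\bigl((x\oplus H)\oplus V_s\bigr)\oplus V_s$. I would then left-cancel $x$ and repeatedly invoke neutrality, $H\oplus V\subseteq U\oplus H$, to transport the factor $H$ to the right past the $V_s$'s, reducing the right-hand side to a set of the form $x\oplus\bigl((U\oplus U')\oplus H\bigr)$ exactly as in the computations of Proposition \ref{3mt1} and Theorem \ref{3yl1}. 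The crucial point is that, after applying $\pi$, all the smallness requirements live in the quotient: since $\pi^{-1}(P)$ is saturated and $\ominus x\oplus\pi^{-1}(P)=\pi^{-1}\bigl(L_{\ominus x}(P)\bigr)$ is a saturated open neighborhood of $0$, it suffices to choose $s$ with $\pi(V_s)$ contained in a prescribed quotient neighborhood of $\pi(0)$ derived from $U,U'$, which is possible because $\{\pi(V_s):s\in S\}$ is a local base at $\pi(0)$. With such an $s$, the accumulated neighborhood $(U\oplus U')\oplus H$ satisfies $\pi\bigl((U\oplus U')\oplus H\bigr)\subseteq L_{\ominus x}(P)$, and saturation gives $(U\oplus U')\oplus H\subseteq\ominus x\oplus\pi^{-1}(P)$, whence $a\oplus V_s\subseteq\pi^{-1}(P)$ and (I) holds. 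This shows $\mathcal B$ is a base, establishing the lemma and therefore the equality $\pi\omega(G/H)=\omega(G/H)$.
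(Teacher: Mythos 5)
Your proposal is correct and takes essentially the same route as the paper: the paper also fixes a dense set of size $\pi\omega(G/H)$ lifted to $A\subseteq G$, uses Theorem \ref{3yl1} to get a local base at $\pi(0)$ of the form $\{\pi(W):W\in\gamma\}$ with $|\gamma|\le\pi\omega(G/H)$, and shows $\{\pi(a\oplus W):a\in A,\ W\in\gamma\}$ is a base via Proposition \ref{3mt1} together with the same neutrality/gyroassociativity computations. Your packaging of this as the cardinal inequality $\omega(G/H)\le d(G/H)\cdot\chi(G/H)$ is just a modular restatement of what the paper does implicitly.
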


\begin{proof}
Since $H$ is an admissible subgyrogroup generated from $\mathscr U$, there exists a sequence $\{U_{n}:n\in \mathbb{N}\}$ of symmetric open neighborhoods of the identity $0$ in $G$ satisfying $U_{n}\in \mathscr U$ and $U_{n+1}\oplus (U_{n+1}\oplus U_{n+1})\subset U_{n}$, for each $n\in \mathbb{N}$, and such that $H=\bigcap _{n\in \mathbb{N}}U_{n}$. Therefore, we have $gyr[x,y](H)=H$ for any $x,y\in G$. Put $\kappa =\pi\omega (G/H)$ and $B$ is a dense subset of $G/H$ with $|B|\leq \kappa$. Set $A\subset G$ with $\pi (A)=B$ and $|A|=|B|$. As $\pi\chi (G/H)\leq \pi\omega (G/H)$, by Theorem \ref{3yl1}, there exists a family $\gamma$ of symmetric open neighborhoods of $0$ in $G$ with $|\gamma|\leq \kappa$ and $\{\pi (U):U\in \gamma\}$ is a local base at $\pi (0)$ in $G/H$. Put $\mathcal{C}=\{\pi (a\oplus W):a\in A,W\in \gamma\}$. Then $\mathcal{C}$ is a base for $G/H$.

Indeed, for arbitrary $y\in G/H$, choose $x\in G$ with $\pi (x)=y$. Let $O$ be an open neighborhood of $y$ in $G/H$, then we can find $U\in \mathscr{U}$ such that $\pi (x\oplus (U\oplus U))\subset O$. Then $\pi (x\oplus (V\oplus V))\subset \pi (x\oplus (U\oplus U))$ for some $V\in \gamma$. Since $H$ is neutral, there exists $W\in \mathscr{U}$ with $H\oplus W\subset V\oplus H$. By Proposition \ref{3mt1}, there exists $a\in A$ such that $y=\pi (x)\in \pi (a\oplus W)$, then $x\in (a\oplus W)\oplus H$. We can find $w\in W$ and $h\in H$ such that $x=(a\oplus w)\oplus h$. Then $x\oplus gyr[a\oplus w,h](\ominus h)=((a\oplus w)\oplus h)\oplus gyr[a\oplus w,h](\ominus h)=a\oplus w$. Moreover,
\begin{eqnarray}
a&=&(a\oplus w)\oplus gyr[a,w](\ominus w)\nonumber\\
&=&(x\oplus gyr[a\oplus w,h](\ominus h))\oplus gyr[a,w](\ominus w)\nonumber\\
&\in& (x\oplus gyr[a\oplus w,h](H))\oplus gyr[a,w](W)\nonumber\\
&=&(x\oplus H)\oplus W.\nonumber
\end{eqnarray}

Thus, there exist $h_{1}\in H$ and $w_{1}\in W$ such that $a=(x\oplus h_{1})\oplus w_{1}$, so

\begin{eqnarray}
x&=&(a\oplus w)\oplus h\nonumber\\
&=&(((x\oplus h_{1})\oplus w_{1})\oplus w)\oplus h\nonumber\\
&=&((x\oplus (h_{1}\oplus gyr[h_{1},x](w_{1})))\oplus w)\oplus h\nonumber\\
&=&((x\oplus (h_{1}\oplus w_{2}))\oplus w)\oplus h\nonumber ~~( for~some~w_{2}\in W,~since~W\in  \mathscr{U})
\end{eqnarray}

Since $H\oplus W\subset V\oplus H$, we can find $v\in V$ and $h_{2}\in H$ such that $h_{1}\oplus w_{2}=v\oplus h_{2}$. Then

\begin{eqnarray}
((x\oplus (h_{1}\oplus w_{2}))\oplus w)\oplus h&=&((x\oplus (v\oplus h_{2}))\oplus w)\oplus h\nonumber\\
&=&(((x\oplus v)\oplus gyr[x,v](h_{2}))\oplus w)\oplus h\nonumber\\
&=&(((x\oplus v)\oplus h_{3})\oplus w)\oplus h\nonumber ~~(H~is~generated~from~\mathscr{U})\\
&=&((x\oplus v)\oplus (h_{3}\oplus gyr[h_{3},x\oplus v](w)))\oplus h\nonumber\\
&=&((x\oplus v)\oplus (h_{3}\oplus w_{3}))\oplus h\nonumber ~~(since~w\in W)\\
&=&((x\oplus v)\oplus (v_{1}\oplus h_{4}))\oplus h\nonumber\\
&=&(((x\oplus v)\oplus v_{1})\oplus gyr[x\oplus v,v_{1}](h_{4}))\oplus h\nonumber\\
&=&((x\oplus (v\oplus gyr[v,x](v_{1})))\oplus h_{5})\oplus h\nonumber\\
&\in& ((x\oplus (V\oplus V))\oplus H)\oplus H.\nonumber
\end{eqnarray}

Thus,
\begin{eqnarray}
y&=&\pi (x)\nonumber\\
&\in&\pi (a\oplus W)\nonumber\\
&\subset& \pi (((x\oplus (V\oplus V))\oplus H)\oplus H)\nonumber\\
&=&\pi((x\oplus (V\oplus V))\oplus H)\nonumber\\
&=&\pi (x\oplus (V\oplus V))\nonumber\\
&\subset&\pi (x\oplus (U\oplus U))\nonumber\\
&\subset&O.\nonumber
\end{eqnarray}

Moreover, since $|\mathcal{C}|\leq |A|\cdot |\gamma|\leq \kappa$, we have that $\omega (G/H)\leq \pi \omega (G/H)$.
\end{proof}

Naturally, we pose the following question.

\begin{question}
If $(G,\tau ,\oplus)$ is a strongly topological gyrogroup with a symmetric neighborhood base $\mathscr U$ at $0$ and $H$ is an admissible subgyrogroup generated from $\mathscr U$, whether could the two conditions $\pi\omega (G/H)=\omega (G/H)$ and $\pi\chi(G/H)=\chi(G/H)$ hold?
\end{question}


\begin{thebibliography}{33}

\bibitem{AA} A.V. Arhangel' ski\v\i, M. Tkachenko, {\it Topological Groups and Related Structures}, Atlantis Press and World Sci., 2008.

\bibitem{AA2010} A.V. Arhangel' ski\v\i, M.M. Choban, {\it On remainders of rectifiable spaces}, Topol. Appl., 157 (2010) 789-799.

\bibitem{AW} W. Atiponrat, {\it Topological gyrogroups: generalization of topological groups}, Topol. Appl., 224(2017)73--82.

\bibitem{BL} M. Bao, F. Lin, {\it Feathered gyrogroups and gyrogroups with countable pseudocharacter}, Filomat, 33 (16)(2019) 5113-5124.

\bibitem{BL1} M. Bao, F. Lin, {\it Submetrizability of strongly topological gyrogroups}, Houston Jour. Math., (2020) Accepted.

\bibitem{BL2} M. Bao, F. Lin, {\it Quotient with respect to admissible L-subgyrogroups}, Topol. Appl., (2020) 107492.

\bibitem{BL3} M. Bao, F. Lin, {\it Submaximal properties in (strongly) topological gyrogroups}, Filomat, (2020) Accepted.

\bibitem{BLX} M. Bao, X. Ling, X. Xu, {\it Strongly topological gyrogroups and quotient with respect to L-subgyrogroups}, Houston Jour. Math., (2021) Accepted.

\bibitem{BT} T. Banakh, {\it Topological spaces with an $\omega^{\omega}$-base}, Diss. Math., 538 (2019) 1-141.

\bibitem{BZX} M. Bao, X. Zhang, X. Xu, {\it Separability in (strongly) topological gyrogroups}, https://arxiv.org/abs/2011.02633.

\bibitem{BZX1} M. Bao, X. Zhang, X. Xu, {\it The strong Pytkeev property and strongly countable completeness in (strongly) topological gyrogroups}, Filomat, (2021) Accepted.

\bibitem{BZX2} M. Bao, X. Zhang, X. Xu, {\it Topological gyrogroups with Fr\'echet-Urysohn property and $\omega^{\omega}$-base}, Bulle. Iranian Math. Sco., (2021) DOI:10.1007/s41980-021-00576-w.

\bibitem{CZ} Z. Cai, S. Lin, W. He, {\it A note on Paratopological Loops}, Bulletin of the Malaysian Math. Sci. Soc., 42(5) (2019) 2535-2547.

\bibitem{CZ1} Z. Cai, P. Ye, S. Lin, B. Zhao, {\it A note on paratopological groups with an $\omega ^{\omega}$-base}, Topol. Appl., 275 (2020) 107151.

\bibitem{E} R. Engelking, General Topology(revised and completed edition), Heldermann Verlag,
Berlin, 1989.

\bibitem{FM} M. Ferreira, {\it Factorizations of M\"{o}bius gyrogroups}, Adv. Appl. Clifford Algebras, 19 (2009) 303--323.

\bibitem{FM1} M. Ferreira, G. Ren, {\it M\"{o}bius gyrogroups: A Clifford algebra approach}, J. Algebra, 328 (2011) 230-253.

\bibitem{FM2} M. Ferreira, S. Teerapong, {\it Orthogonal gyrodecompositions of real inner product gyrogroups}, Symmetry, 12(6) (2020),941, 37pp.

\bibitem{FS} S.P. Franklin, {\it Spaces in which sequences suffice}, Fundam. Math. 57 (1965) 107--115.

\bibitem{GK} S. Gabriyelyan, J. Kakol, {\it On topological spaces and topological groups with certain local countable networks}, Topol. Appl., 190 (2015) 59-73.

\bibitem{GKL} S. Gabriyelyan, J. Kakol, A. Leiderman,{\it On topological groups with a small base and metrizability}, Fundam. Math., 299 (2015) 129-158.

\bibitem{LPT} A. Leiderman, V. Pestov, A. Tomita, {\it On topological groups admitting a base at identity indexed with $\omega^{\omega}$}, Fundam. Math., 238 (2017) 79-100.

\bibitem{LF} F. Lin. R. Shen, {\it On rectifiable spaces and paratopological groups}, Topol. Appl., 158(2011)597--610.

\bibitem{LF1} F. Lin. C. Liu, S. Lin, {\it A note on rectifiable spaces}, Topol. Appl., 159(2012)2090--2101.

\bibitem{LF2} F. Lin, {\it Compactly generated rectifiable spaces or paratopological groups}, Math. Commun., 18(2013)417--427.

\bibitem{linbook} S. Lin, Z. Yun, {\it Generalized Metric Spaces and Mappings},
Science Press, Atlantis Press, 2017.

\bibitem{ST} T. Suksumran, K. Wiboonton, {\it Isomorphism theorems for gyrogroups and L-subgyrogroups}, J. Geom. Symmetry Phys., 37 (2015) 67--83.

\bibitem{UA1988} A.A. Ungar, {\it The Thomas rotation formalism underlying a nonassociative group structure for relativistic velocities}, Appl. Math. Lett., 1(4) (1988) 403--405.

\bibitem{UA2002} A.A. Ungar,{\it  Beyond the Einstein addition law and its gyroscopic Thomas precession: The theory
of gyrogroups and gyrovector spaces}, Fundamental Theories of Physics, vol. 117, Springer, Netherlands, 2002.

\bibitem{UA} A.A. Ungar, {\it Analytic Hyperbolic Geometry and Albert Einstein's Special Theory of Relativity}, World Scientific, Hackensack, New Jersey, 2008.

\bibitem{UA2005} A.A. Ungar, {\it Analytic hyperbolic geometry: Mathematical foundations and applications}, World Scientific, Hackensack, 2005.

\end{thebibliography}
\end{document}